\newtheorem{definition}{Definition}[section]
\newtheorem{theorem}{Theorem}[section]
\newtheorem{lemma}{Lemma}[section]
\newtheorem{remark}{Remark}[section]
\newtheorem{example}{Example}[section]
\journal{Expert Systems with Applications}
\begin{document}
\begin{frontmatter}

\title{A multilevel block building algorithm for fast modeling generalized separable systems}
\tnotetext[mytitlenote]{This work was supported by the National Natural Science Foundation of China (Grant No. 11532014).}
\author[lhd,UCAS]{Chen Chen}
\ead{chenchen@imech.ac.cn}
\author[lhd]{Changtong Luo\corref{cor1}}

\ead{luo@imech.ac.cn}
\author[lhd,UCAS]{Zonglin Jiang}
\ead{zljiang@imech.ac.cn}

\cortext[cor1]{Corresponding author}
\address[lhd]{State Key Laboratory of High Temperature Gas Dynamics, Institute of Mechanics, Chinese Academy of Sciences, Beijing 100190, China}
\address[UCAS]{School of Engineering Sciences, University of Chinese Academy of Sciences, Beijing 100049, China}

\begin{abstract}
Data-driven modeling plays an increasingly important role in different areas of engineering. For most of existing methods, such as genetic programming (GP), the convergence speed might be too slow for large scale problems with a large number of variables. It has become the bottleneck of GP for practical applications. Fortunately, in many applications, the target models are separable in some sense. In this paper, we analyze different types of separability of some real-world engineering equations and establish a mathematical model of generalized separable system (GS system). In order to get the structure of the GS system, a multilevel block building (MBB) algorithm is proposed, in which the target model is decomposed into a number of blocks, further into minimal blocks and factors. Compare to the conventional GP, MBB can make large reductions to the search space. This makes MBB capable of modeling a complex system. The minimal blocks and factors are optimized and assembled with a global optimization search engine, low dimensional simplex evolution (LDSE). An extensive study between the proposed MBB and a state-of-the-art data-driven fitting tool, Eureqa, has been presented with several man-made problems, as well as some real-world problems. Test results indicate that the proposed method is more effective and efficient under all the investigated cases.
\end{abstract}

\begin{keyword}
Data-driven modeling \sep Genetic programming \sep Generalized separable system \sep Multilevel block building
\end{keyword}

\end{frontmatter}

\section{Introduction}
\label{Section1}
Data-driven modeling has become a powerful technique in different areas of engineering, such as industrial data analysis (\citeauthor{Luo2015-AST}, \citeyear{Luo2015-AST}; \citeauthor{Li2017}, \citeyear{Li2017}), circuits analysis and design (\citeauthor{Ceperic2014}, \citeyear{Ceperic2014}; \citeauthor{Shokouhifar2015}, \citeyear{Shokouhifar2015}; \citeauthor{Zarifi2015}, \citeyear{Zarifi2015}), signal processing (\citeauthor{Yang2005}, \citeyear{Yang2005}; \citeauthor{Volaric2017}, \citeyear{Volaric2017}), empirical modeling (\citeauthor{Gusel2011}, \citeyear{Gusel2011}; \citeauthor{Mehr2017}, \citeyear{Mehr2017}), system identification (\citeauthor{Guo2012}, \citeyear{Guo2012}; \citeauthor{Wong2008}, \citeyear{Wong2008}), etc. For a concerned data-driven modeling problem with $n$ input variables, we aim to find a performance function $f^*:{\mathbb{R}^n} \mapsto \mathbb{R}$ that best explains the relationship between input variables ${\mathbf{x}} = {\left[ {\begin{array}{*{20}{c}}
  {{x_1}}&{{x_2}}& \cdots &{{x_n}} 
\end{array}} \right]^{\text{T}}} \in {\mathbb{R}^n}$ and the target system (or constrained system) based on a given set of sample points ${\mathbf{S}} = {\left[ {\begin{array}{*{20}{c}}
  {{{\mathbf{x}}^{\left( 1 \right)}}}&{{{\mathbf{x}}^{\left( 2 \right)}}}& \cdots &{{{\mathbf{x}}^{\left( N \right)}}} 
\end{array}} \right]^{\text{T}}} \in {\mathbb{R}^{N \times n}}$. Among the existing methods, genetic programming (GP) (\citeauthor{Koza1992}, \citeyear{Koza1992}) is a classical approach. Theoretically, GP can get an optimal solution provided that the computation time is long enough. However, the computational cost of GP for a large scale problem with a large number of input variables is still very expensive. This situation may become even worse with increasing problem size and complexity (increasing number of independent variables and increasing target functional spaces).

Apart from basic GP, rising attention has been paid to two aspects. The first one focuses on evolutionary strategy, such as grammatical evolution (\citeauthor{ONeil2003}, \citeyear{ONeil2003}), parse-matrix evolution (\citeauthor{Luo2012-PME}, \citeyear{Luo2012-PME}), etc. These variants of GP can simplify the coding process. Other methods like clone selection programming (\citeauthor{Gan2009}, \citeyear{Gan2009}), artificial bee colony programming (\citeauthor{Karaboga2012}, \citeyear{Karaboga2012}), etc, are based on the idea of biological simulation process. However, these methods helps little on improving the convergence speed when solving large scale problems. The second branch exploits strategies to reduce the search space of solution. For instance, \citeauthor{McConaghy2011} (\citeyear{McConaghy2011}) has presented the first non-evolutionary algorithm, Fast Function eXtraction (FFX), which confined its search space to generalized linear space. However, the computational efficiency is gained at the sacrifice of losing the generality of the solution. More recently, \citeauthor{Worm2016} (\citeyear{Worm2016}) has proposed a deterministic machine learning algorithm, Prioritized Grammar Enumeration (PGE), in his thesis. PGE merges isomorphic chromosome presentations (equations) into a canonical form. However, debate still remains on how simplification effects the solving process (\citeauthor{Kinzett2008}, \citeyear{Kinzett2008}; \citeauthor{Kinzett2009}, \citeyear{Kinzett2009}; \citeauthor{McRee2010}, \citeyear{McRee2010}).

In many scientific or engineering problems, the target model are separable. \citeauthor{Luo2017} (\citeyear{Luo2017}) have presented a divide-and-conquer (D\&C) method for GP. The authors indicated that the solving process could be accelerated by detecting the correlation between each variable and the target function. In \citeauthor{Luo2017} (\citeyear{Luo2017}), a special method, bi-correlation test (BiCT), was proposed to divide a concerned target function into a number of sub-functions. Compared to conventional GP, D\&C method could reduce the computational effort (computational complexity) by orders of magnitude. More recently, \citeauthor{Chen2017-BBP} (\citeyear{Chen2017-BBP}) have proposed an improved version of BiCT method, block building technique, where the structure of the separable function is partitioned into blocks and factors. In \citeauthor{Chen2017-BBP} (\citeyear{Chen2017-BBP}), the optimization model is not linearly constructed by basis functions, but assembled with the factors and different binary operators ($+,-,\times$ and $\div$). However, many practical models are out of the scope of the mathematical form of the separable function. That is, these methods are hard to deal with complex problems in engineering.

In this paper, different types of separability of some practical engineering problems are analyzed. The form of the separable function introduced in \citeauthor{Luo2017} (\citeyear{Luo2017}) is further generalized, and a mathematical model of generalized separable system (GS system) is established. In order to get the structure of the GS system, a multilevel block building (MBB) algorithm is proposed, where the target model is decomposed into a number of block, further into minimal blocks and factors. Two types of variables in GS system are also identified, namely repeated variable and non-repeated variable. The new algorithm is an improved version of the basic block building method (\citeauthor{Chen2017-BBP}, \citeyear{Chen2017-BBP}). The performance of LDSE powered MBB is compared with the results of Eureqa, which is a state-of-the-art data-driven fitting tool. Numerical results show that the proposed algorithm is effective, and is able to recover all the investigated cases rapidly and reliably.

The rest of this paper is organized as follows. Section \ref{Section2} analyzed different types of separability in practical engineering. Section \ref{Section3} is devoted to establishing the mathematical model of GS system. In Section \ref{Section4} we propose a multilevel block building algorithm as well as some preliminary theorems. Section \ref{Section5} presents numerical results and discussions for the proposed algorithm. The paper is concluded in Section \ref{Section6} with remarking the future work.

\section{Observation and discussion of the types of separability}
\label{Section2}

\subsection{Observation}
\label{Section2-1}
As above mentioned, in many applications, the target model are separable. In this section, four examples of real-world problems are given as follows to illustrate several common types of separability in practical problems.

\begin{example}
\label{ex2-1}
When developing a rocket engine, it is crucial to model the internal flow of a high-speed compressible gas through the nozzle. The closed-form expression for the mass flow through a choked nozzle (\citeauthor{Anderson2006}, \citeyear{Anderson2006}) is
\begin{equation}
\label{nozzle}
\dot m = \frac{{{p_0}{A^*}}}{{\sqrt {{T_0}} }}\sqrt {\frac{\gamma }{R}{{\left( {\frac{2}{{\gamma  + 1}}} \right)}^{{{\left( {\gamma  + 1} \right)} \mathord{\left/
 {\vphantom {{\left( {\gamma  + 1} \right)} {\left( {\gamma  - 1} \right)}}} \right.
 \kern-\nulldelimiterspace} {\left( {\gamma  - 1} \right)}}}}}.
\end{equation}
In Eq. (\ref{nozzle}), the five independent variables, $p_0$, $T_0$, $A^*$, $R$ and $\gamma$ are all separable. The equation can be called a multiplicatively separable function, which can be re-expressed as follows
\begin{equation}
\label{nozzle2}
\begin{aligned}
\dot m & = f\left( {{p_0},{A^*},{T_0},R,\gamma } \right) \\
& = {\varphi _1}\left( {{p_0}} \right) \times {\varphi _2}\left( {{A^*}} \right) \times {\varphi _3}\left( {{T_0}} \right) \times {\varphi _4}\left( R \right) \times {\varphi _5}\left( \gamma  \right).
\end{aligned}
\end{equation}
\end{example}

The sub-functions of the multiplicatively separable function are connected with the binary operator times ($\times$). Similarly, an additively separable function whose the sub-functions are combined with the binary operator plus ($+$) can also be given. The above two classes of separable functions are very common in engineering. Furthermore, the binary operator between two sub-functions could be plus ($+$) or times ($\times$).

\begin{example}
\label{ex2-2}
In aircraft design, the lift coefficient of a whole aircraft (\citeauthor{Zhang2002}, \citeyear{Zhang2002}) can be expressed as
\begin{equation}
\label{CL}
{C_L} = {C_{L\alpha }}\left( {\alpha  - {\alpha_0}} \right) + {C_{L{\delta _e}}}{\delta _e}\frac{{{S_{{\text{HT}}}}}}{{{S_{{\text{ref}}}}}},
\end{equation}
where the variable $C_{L\alpha }$, $C_{L{\delta _e}}$, $\delta _e$, $S_{{\text{HT}}}$ and $S_{{\text{ref}}}$ are separable. The variable $\alpha$ and ${\alpha_0}$ are not separable, but their combination $\left( \alpha,\alpha_0 \right)$ can be considered separable. Hence, Eq. (\ref{CL}) can be re-expressed as
\begin{equation}
\label{CL2}
\begin{aligned}
{C_L} & =  f\left( {{C_{L\alpha }},\alpha ,{\alpha _0},{C_{L{\delta _e}}},{\delta _e},{S_{{\text{HT}}}},{S_{{\text{ref}}}}} \right) \\
& = {\varphi _1}\left( {{C_{L\alpha }}} \right) \times {\varphi _2}\left( {\alpha ,{\alpha _0}} \right) + {\varphi _3}\left( {{C_{L{\delta _e}}}} \right) \times {\varphi _4}\left( {{\delta _e}} \right)\times{\varphi _5}\left( {{S_{{\text{HT}}}}} \right) \times {\varphi _6}\left( {{S_{{\text{ref}}}}} \right).
\end{aligned}
\end{equation}
\end{example}

\begin{example}
\label{ex2-3}
The flow past a circular cylinder is a classical problem in fluid dynamics. A valid stream function for the inviscid, incompressible flow over a circular cylinder of radius $R$ (\citeauthor{Anderson2011}, \citeyear{Anderson2011}) is
\begin{equation}
\label{flowequ}
\psi = \left( {{V_\infty }r\sin \theta } \right)\left( {1 - \frac{{{R^2}}}{{{r^2}}}} \right) + \frac{\Gamma }{{2\pi }}\ln \frac{r}{R},
\end{equation}
which can be re-expressed as
\begin{equation}
\label{flowequ2}
\begin{aligned}
  \psi  &=   f\left( {{V_\infty },\sin \theta ,R,r,\Gamma } \right) \\ 
   &=   {\varphi _1}\left( {{V_\infty }} \right) \times {\varphi _2}\left( {\sin \theta } \right) \times \boxed{{\varphi _3}\left( {r,R} \right)} + {\varphi _4}\left( \Gamma  \right) \times \boxed{{\varphi _5}\left( {r,R} \right)}. 
\end{aligned}
\end{equation}
Note that the variable $r$ and $R$ appear twice in Eq. (\ref{flowequ}). In other words, variable $r$ and $R$ have two sub-functions, namely ${\varphi _3}\left( {r,R} \right) = \left( {1 - {{{R^2}} \mathord{\left/
 {\vphantom {{{R^2}} {{r^2}}}} \right.
 \kern-\nulldelimiterspace} {{r^2}}}} \right) \cdot r$ and ${\varphi _5}\left( {r,R} \right) = \ln \left( {{r \mathord{\left/
 {\vphantom {r R}} \right.
 \kern-\nulldelimiterspace} R}} \right)$. Although Eq. (\ref{flowequ2}) is not a strictly separable function, the variables of Eq. (\ref{flowequ2}) also have separability.
\end{example}

\begin{example}
\label{ex2-4}
In aircraft design, the maximum lift coefficient of a whole aircraft (\citeauthor{Zhang2002}, \citeyear{Zhang2002}) can be approximately modeled as
\begin{equation}
\label{CLm}
\left\{ \begin{aligned}
  {C_{L\max }} =  & {C_{L\max ,{\text{W}}}} - {C_{L\alpha ,WF}} \cdot \Delta {\alpha _{{\text{W/c}}}} + {C_{L\alpha ,{\text{H}}}}\left( {\frac{{{S_{\text{H}}}}}{S}} \right)\left[ {{\alpha _{CL\max }}\left( {1 - \frac{{\partial \varepsilon }}{{\partial \alpha }}} \right) - {\varepsilon _{0,{\text{H}}}} + {\psi _{\text{H}}}} \right] \\ 
   &  + {C_{L\alpha ,{\text{c}}}}\left( {\frac{{{S_{\text{c}}}}}{S}} \right)\left[ {{\alpha _{CL\max }}\left( {1 + \frac{{\partial {\varepsilon _{\text{c}}}}}{{\partial \alpha }}} \right) + {\varepsilon _{0,{\text{c}}}} + {\psi _{\text{H}}}} \right] \\ 
  {C_{L\alpha ,WF}} =  & {K_{{\text{WF}}}} \cdot {C_{L\alpha ,W}} \\ 
  {K_{{\text{WF}}}} =  & 1 + 0.025\left( {\frac{{{d_{\text{F}}}}}{b}} \right) - 0.25{\left( {\frac{{{d_{\text{F}}}}}{b}} \right)^2} \\ 
\end{aligned}  \right..
\end{equation}
This empirical formula plays an important role in the initial configuration design of an aircraft. It has a complex structure and 19 design variables, but the separability of these variables still exist in Eq. (\ref{CLm}). Similar to Eq. (\ref{flowequ}), note that variable ${C_{L\alpha ,{\text{H}}}}$, ${C_{L\alpha ,{\text{c}}}}$, ${S_{\text{H}}}$, ${S_{\text{c}}},S$ and ${\alpha _{{C_{L\max }}}}$ also appear at least twice in Eq. (\ref{CLm}). The partition of Eq. (\ref{CLm}) will be detailly analyzed in Section \ref{Section3}.
\end{example}

\subsection{Discussion}
\label{Section2-2}
As seen from the above subsection, many practical problems have the feature of separability. \citeauthor{Luo2017} (\citeyear{Luo2017}) suggested using the separability to accelerate the conventional GP for data-driven modeling. The separable function introduced in \citeauthor{Luo2017} (\citeyear{Luo2017}) could be described as follows.
\begin{definition}[Partially separable function]
\label{def1}
A scalar function $f\left( X \right)$ with $n$ continuous variables $X = \left\{ {{x_i}:i = 1,2, \cdots ,n} \right\}$ ($f:{\mathbb{R}^n} \mapsto \mathbb{R}$, $X \subset \Omega \in {\mathbb{R}^n}$, where $\Omega$ is a closed bounded convex set, such that $\Omega  = \left[ {{a_1},{b_1}} \right] \times \left[ {{a_2},{b_2}} \right] \times  \cdots  \times \left[ {{a_n},{b_n}} \right]$) is said to be separable if and only if it can be written as
\begin{equation}
\label{def1eq}
f\left( X \right) = {c_0}{ \otimes _1}{c_1}{\varphi _1}\left( {{X_1}} \right){ \otimes _2}{c_2}{\varphi _2}\left( {{X_2}} \right){ \otimes _3} \cdots { \otimes _m}{c_m}{\varphi _m}\left( {{X_m}} \right),
\end{equation}
where the variable set ${X_i}$ is a proper subset of $X$, such that ${X_i} \subset X$ with $\bigcup\nolimits_{i = 1}^m {{X_i}}  = X$, $\bigcap\nolimits_{i = 1}^m {{X_i}}  = \emptyset$, and the cardinal number of ${X_i}$ is denoted by ${\text{card}}\left( {{X_i}} \right) = {n_i}$, for $\sum\nolimits_{i = 1}^m {{n_i}}  = n$ and $i = 1,2, \cdots ,m$. 
Sub-function ${\varphi _i}$ is a scalar function such that ${\varphi _i}:{\mathbb{R}^{{n_i}}} \mapsto \mathbb{R}$. 
The binary operator $\otimes_i$ could be plus ($+$) and times ($\times$).
\end{definition}

Note that binary operator, minus ($-$) and division ($/$), are not included in $\otimes$ for simplicity. This does not affect much of its generality, since minus ($-$) could be regarded as $\left(  -  \right) = \left( { - 1} \right) \cdot \left(  +  \right)$, and sub-function could be treated as ${\tilde \varphi _i}\left(  \cdot  \right) = {1 \mathord{\left/
 {\vphantom {1 {{\varphi _i}\left(  \cdot  \right)}}} \right.
 \kern-\nulldelimiterspace} {{\varphi _i}\left(  \cdot  \right)}}$ if only ${\varphi _i}\left(  \cdot  \right) \ne 0$.
 
\begin{definition}[Completely separable function]
\label{def2}
A scalar function $f\left( {\mathbf{x}} \right)$ with $n$ continuous variables ($f:{\mathbb{R}^n} \mapsto \mathbb{R}$, $\mathbf{x} \in {\mathbb{R}^n}$) is said to be completely separable if and only if it can be rewritten as Eq. (\ref{def1eq}) and $n_i=1$ for all $i=1, 2, \cdots, m$.
\end{definition}

However, many target models in engineering (e.g., Example \ref{ex2-3} and \ref{ex2-4}) are inconsistent with the above definition of the separable function. It is because that some variables (e.g., variable ${V_\infty }$, $\sin \theta$ and $\Gamma$ of Eq. (\ref{flowequ})) appears only once in a concerned target model, while the other variables (e.g., variable $r$ and $R$ of Eq. (\ref{flowequ})) appears more than once. Examining Eq. (\ref{flowequ}) and Eq. (\ref{CLm}), it is found that the two equations share the same type of separability. This feature motivates us to generalize the mathematical form of the separable function, namely Eq. (\ref{def1eq}), and establish a more general model.

\section{The mathematical model of generalized separable system}
\label{Section3}

\subsection{Definitions}
\label{Section3-1}

In the present section, we aim to establish a more general mathematical model, generalized separable system (GS system), which could describe the type of all the typical examples discussed in Section \ref{Section2-1}. The GS system is mathematically formulated as follows.

\begin{definition}[Generalized separable system]
\label{def-GSS}
The mathematical model of a generalized separable system $f\left( X \right)$ with $n$ continuous variables $X = \left\{ {{x_i}:i = 1,2, \cdots ,n} \right\}$, ($f:{\mathbb{R}^n} \mapsto \mathbb{R}$, $X \subset \Omega \in {\mathbb{R}^n}$, where $\Omega$ is a closed bounded convex set, such that $\Omega  = \left[ {{a_1},{b_1}} \right] \times \left[ {{a_2},{b_2}} \right] \times  \cdots  \times \left[ {{a_n},{b_n}} \right]$) is defined as
\begin{equation}
\label{eq-GSS}
\begin{aligned}
f\left( X \right) &= f\left( {{X^r},{{\bar X}^r}} \right) = {c_0} + \sum\limits_{i = 1}^m {{c_i}{\varphi _i}\left( {X_i^r,\bar X_i^r} \right)}  = \sum\limits_{i = 1}^m {{c_i}{{\tilde \omega }_i}\left( {X_i^r} \right){{\tilde \psi }_i}\left( {\bar X_i^r} \right)}\\&= {c_0} + \sum\limits_{i = 1}^m {{c_i}\prod\limits_{j = 1}^{{p_i}} {{\omega _{i,j}}\left( {X_{i,j}^r} \right)} \prod\limits_{k = 1}^{{q_i}} {{\psi _{i,k}}\left( {\bar X_{i,k}^r} \right)} },
\end{aligned}
\end{equation}
where the variable set ${X^r} = \left\{ {{x_i}:i = 1,2, \cdots ,l} \right\}$ is a proper subset of $X$, such that ${X^r} \subset X$, and the cardinal number of ${X^r}$ is ${\text{card}}\left( {{{X}^r}} \right) = l$. 
${{\bar X}^r}$ is the complementary set of $X^r$ in $X$, i.e. ${{\bar X}^r} = {\complement _X}{X^r}$, where ${\text{card}}\left( {{{\bar X}^r}} \right) = n - l$. 
$X_i^r$ is the subset of ${X^r}$, such that $X_i^r \subseteq {X^r}$, where ${\text{card}}\left( {X_i^r} \right) = {r_i}$. 
$X_{i,j}^r \subseteq X_i^r$, 
such that $\bigcup\nolimits_{j = 1}^{{p_i}} {X_{i,j}^r}  = X_i^r$, $\bigcap\nolimits_{j = 1}^{{p_i}} {X_{i,j}^r}  = \emptyset$, where ${\text{card}}\left( {X_{i,j}^r} \right) = {r_{i,j}}$, for $i = 1,2, \cdots ,m$, $j = 1,2, \cdots ,{p_i}$ and $\sum\nolimits_{j = 1}^{{p_i}} {{r_{i,j}}}  = {r_i}$.
$\bar X_i^r \subset {{\bar X}^r}$ ($\bar X_i^r \ne \emptyset$), 
such that $\bigcup\nolimits_{i = 1}^m {\bar X_i^r}  = {{\bar X}^r}$, $\bigcap\nolimits_{i = 1}^m {\bar X_i^r}  = \emptyset$, where ${\text{card}}\left( {\bar X_i^r} \right) = {s_i}$, for $s_i \geqslant 1$, $\sum\nolimits_{i = 1}^m {{s_i}}  = n-l$.
$\bar X_{i,k}^r \subseteq \bar X_i^r$, 
such that $\bigcup\nolimits_{k = 1}^{{q_i}} {\bar X_{i,k}^r}  = \bar X_i^r$, $\bigcap\nolimits_{k = 1}^{{q_i}} {\bar X_i^r}  = \emptyset$, where ${\text{card}}\left( {\bar X_{i,k}^r} \right) = {s_{i,k}}$, for $k = 1,2, \cdots ,{q_i}$ and $\sum\nolimits_{k = 1}^{{q_i}} {{s_{i,k}}}  = {s_i}$. 
Sub-functions ${\varphi _i}$, ${{\tilde \omega }_i}$, ${{\tilde \psi }_i}$, ${\omega _{i,j}}$ and ${\psi _{i,k}}$ are scalar functions, 
such that ${\varphi _i}:{\mathbb{R}^{r_i+s_i}} \mapsto \mathbb{R}$, ${{\tilde \omega }_i}:{\mathbb{R}^{r_i}} \mapsto \mathbb{R}$, ${{\tilde \psi }_i}:{\mathbb{R}^{s_i}} \mapsto \mathbb{R}$, ${\omega _{i,j}}:{\mathbb{R}^{{r_{i,j}}}} \mapsto \mathbb{R}$ and ${\psi _{i,k}}:{\mathbb{R}^{{s_{i,k}}}} \mapsto \mathbb{R}$, respectively. 
${c_0},{c_1}, \cdots ,{c_m}$ are constant coefficients.
\end{definition}

\begin{definition}[Repeated variable, non-repeated variable, block and factor]
\label{def-4concepts}
In Eq. (\ref{eq-GSS}), the variables belong to ${X^r}$ and ${{\bar X}^r}$ are called repeated variables and non-repeated variables, respectively.
The sub-function ${\varphi _i}\left(  \cdot  \right)$ is called the $i$-th minimal block of $f\left( X \right)$, for $i = 1,2, \cdots ,m$.
Any combination of the minimal blocks is called a block of $f\left( X \right)$.
The sub-functions ${\omega _{i,j}}\left(  \cdot  \right)$ and ${\psi _{i,k}}\left(  \cdot  \right)$ are called the $j$-th and $k$-th factors of the repeated variables and non-repeated variables in $i$-th minimal block ${\varphi _i}\left(  \cdot  \right)$, respectively, for $j = 1,2, \cdots ,{p_i}$ and $k = 1,2, \cdots ,{q_i}$.
\end{definition}


\begin{remark}
\label{remark1}
If $m=1$, the Eq. (\ref{eq-GSS}) can be simplified as the Eq. (\ref{def1eq}), which is a special case of the GS system, referring the reader to the \citeauthor{Luo2017} (\citeyear{Luo2017}) for a more detailed analysis. In this letter, we consider $m \geqslant 2$. 
\end{remark}

\begin{remark}
\label{remark2}
If $X_i^r = \emptyset  \Leftrightarrow {r_i} = 0$, then we let ${\tilde \omega _i}\left( {X_i^r} \right) = {\omega _{i,j}}\left( {X_{i,j}^r} \right): = 1$, which indicates that there is no repeated variable in the $i$-th block ${\varphi _i}\left(  \cdot  \right)$. If ${X^r} = \emptyset$, there is no repeated variable in Eq. (\ref{eq-GSS}), then the Eq. (\ref{eq-GSS}) can also be simplified as the Eq. (\ref{def1eq}).
\end{remark}

\subsection{Interpretation}
\label{Section3-2}

In Definition \ref{def-4concepts}, two types of variables, namely repeated variable and non-repeated variable, are classified that helps us to better understand the structure of the GS system. This is the essential difference between Eq. (\ref{eq-GSS}) and Eq. (\ref{def1eq}). The non-repeated variables refer to the variables that appear only once in the GS system, while the repeated variables are the variables that appear more than once. Therefore Eq. (\ref{def1eq}) is a special case of Eq. (\ref{eq-GSS}). In Eq. (\ref{def1eq}), there is no repeated variable, and all the variables can be considered as the non-repeated variables.

The `block' and `factor' are two different types of sub-functions in the GS system. In GS system, all the blocks are connected with the binary operator plus ($+$), while the factors are connected with the binary operator times ($\times$). The minimal block is the block in which all the sub-functions (factors) are connected with times ($\times$). The factors can be considered as the `minimum element' of the GS system.

Two examples are given as follows to provide the reader with the explanations of the aforementioned concepts to fully understand the feature of such defined GS system.

\begin{example}
\label{example2-1}
Column 4, 5 and 6 of Table \ref{table2} show the repeated variables, number of minimal blocks and factors of 10 cases given in \ref{appendixA}, respectively. All the minimal blocks of the these cases are boxed in \ref{appendixA}.
\end{example}

\begin{example}
\label{example2-2}
Eq. (\ref{CLm}) is a typical GS system. Substitute ${C_{L\alpha ,WF}}$ and ${K_{{\text{WF}}}}$ into ${C_{L\max }}$, then expand ${C_{L\max }}$. The repeated variables, non-repeated variables, blocks and factors of the Eq. (\ref{CLm}) can be obtained as follows.
\begin{enumerate}[1)]
\item 6 repeated variables: ${C_{L\alpha ,{\text{H}}}}$, ${C_{L\alpha ,{\text{c}}}}$, ${S_{\text{H}}}$, ${S_{\text{c}}},S$ and ${\alpha _{{C_{L\max }}}}$.
\item 11 non-repeated variables: ${C_{L\max ,{\text{W}}}}$, ${d_{\text{F}}}$, $b$, ${C_{L\alpha ,W}}$, $\Delta {\alpha _{{\text{W/c}}}}$, $\partial \varepsilon /\partial \alpha $, $\partial {\varepsilon _{\text{c}}}/\partial \alpha $, ${\varepsilon _{0,{\text{H}}}}$, ${\varepsilon _{0,{\text{c}}}}$, ${\psi _{\text{H}}}$ and ${\psi _{\text{c}}}$. Note that the terms ${\frac{{\partial \varepsilon }}{{\partial \alpha }}}$ and ${\frac{{\partial {\varepsilon _{\text{c}}}}}{{\partial \alpha }}}$ can be considered as input variable here.


\item 8 minimal blocks: ${\varphi _1}\left( {{C_{L\max ,{\text{W}}}}} \right)$, ${\varphi _2}\left( {{d_{\text{F}}},b,{C_{L\alpha ,W}},\Delta {\alpha _{{\text{W/c}}}}} \right)$, ${\varphi _3}\left( {{C_{L\alpha ,{\text{H}}}},{S_{\text{H}}},S,{\alpha _{{C_{L\max }}}},\frac{{\partial \varepsilon }}{{\partial \alpha }}} \right)$, ${\varphi _4}\left( {{C_{L\alpha ,{\text{H}}}},{S_{\text{H}}},S,{\varepsilon _{0,{\text{H}}}}} \right)$, ${\varphi _5}\left( {{C_{L\alpha ,{\text{H}}}},{S_{\text{H}}},S,{\psi _{\text{H}}}} \right)$, ${\varphi _6}\left( {{C_{L\alpha ,{\text{c}}}},{S_{\text{c}}},S,{\alpha _{{C_{L\max }}}},\frac{{\partial {\varepsilon _{\text{c}}}}}{{\partial \alpha }}} \right)$, ${\varphi _7}\left( {{C_{L\alpha ,{\text{c}}}},{S_{\text{c}}},S,{\varepsilon _{0,{\text{c}}}}} \right)$, ${\varphi _8}\left( {{C_{L\alpha ,{\text{c}}}},{S_{\text{c}}},S,{\psi _{\text{c}}}} \right)$.

\item $\binom{8}{1}+\binom{8}{2}+\cdots+\binom{8}{8}=2^8-1$ blocks: ${{\tilde \varphi }_1}\left( {{C_{L\alpha ,{\text{H}}}},{C_{L\alpha ,{\text{c}}}},S,{S_{\text{H}}},{S_{\text{c}}},{\alpha _{{C_{L\max }}}},\frac{{\partial \varepsilon }}{{\partial \alpha }},\frac{{\partial {\varepsilon _{\text{c}}}}}{{\partial \alpha }},{\varepsilon _{0,{\text{H}}}},{\varepsilon _{0,{\text{c}}}},{\psi _{\text{H}}},{\psi _{\text{c}}}} \right)$, ${{\tilde \varphi }_2}\left( {{C_{L\max ,{\text{W}}}},{d_{\text{F}}},b,{C_{L\alpha ,W}},\Delta {\alpha _{{\text{W/c}}}}} \right)$, ${{\tilde \varphi }_3}\left( {{C_{L\max ,{\text{W}}}}} \right)$, etc.

\item 20 factors of repeated variables: ${\omega _{31}}\left( {{C_{L\alpha ,{\text{H}}}}} \right)$, ${\omega _{32}}\left( {{S_{\text{H}}}} \right){\omega _{33}}\left( S \right)$, ${\omega _{34}}\left( {\alpha _{{C_{L\max }}}} \right)$, ${\omega _{41}}\left( {{C_{L\alpha ,{\text{H}}}}} \right)$, ${\omega _{42}}\left( {{S_{\text{H}}}} \right)$, ${\omega _{43}}\left( S \right)$, ${\omega _{51}}\left( {{C_{L\alpha ,{\text{H}}}}} \right)$, ${\omega _{52}}\left( {{S_{\text{H}}}} \right)$, ${\omega _{53}}\left( S \right)$, ${\omega _{61}}\left( {{C_{L\alpha ,{\text{c}}}}} \right)$, ${\omega _{62}}\left( {{S_{\text{c}}}} \right)$, ${\omega _{63}}\left( S \right)$, ${\omega _{64}}\left( {\alpha _{{C_{L\max }}}} \right)$, ${\omega _{71}}\left( {{C_{L\alpha ,{\text{c}}}}} \right)$, ${\omega _{72}}\left( {{S_{\text{c}}}} \right)$, ${\omega _{73}}\left( S \right)$, ${\omega _{81}}\left( {{C_{L\alpha ,{\text{c}}}}} \right)$, ${\omega _{82}}\left( {{S_{\text{c}}}} \right)$ and ${\omega _{83}}\left( S \right)$.
Note that ${\omega _{1, \cdot }}\left(  \cdot  \right) = {\omega _{2, \cdot }}\left(  \cdot  \right): = 1$ in this case.
\item 10 factors of non-repeated variables: ${\psi _{11}}\left( {{C_{L\max ,{\text{W}}}}} \right)$, ${\psi _{21}}\left( {{d_{\text{F}}},b} \right)$, ${\psi _{22}}\left( {{C_{L\alpha ,W}}} \right)$, ${\psi _{23}}\left( {\Delta {\alpha _{{\text{W/c}}}}} \right)$, ${\psi _{31}}\left( {\frac{{\partial \varepsilon }}{{\partial \alpha }}} \right)$, ${\psi _{41}}\left( {{\varepsilon _{0,{\text{H}}}}} \right)$, ${\psi _{51}}\left( {{\psi _{\text{H}}}} \right)$, ${\psi _{61}}\left( {\frac{{\partial {\varepsilon _{\text{c}}}}}{{\partial \alpha }}} \right)$, ${\psi _{71}}\left( {{\varepsilon _{0,{\text{c}}}}} \right)$ and ${\psi _{81}}\left( {{\psi _{\text{c}}}} \right)$.
\end{enumerate}
\end{example}

\section{Multilevel block building}
\label{Section4}

\subsection{Minimal block detection}
\label{Section4-1}

In order to detect the separability of the GS system $f\left( X \right)$, we aim to divide $f\left( X \right)$ into a suitable number of minimal blocks, and further into factors as the typical Example \ref{example2-2}. This technique can be considered as a generalized bi-correlation test (BiCT) method. The BiCT (\citeauthor{Luo2017}, \citeyear{Luo2017}) is developed to detect the separability of a certain additively or multiplicatively separable target function, i.e. the Eq. (\ref{def1eq}). However, due to the presence of repeated variables in the GS system $f\left( X \right)$, the blocks of $f\left( X \right)$ are not additively separable. Under such circumstance, BiCT method can not be directly utilized. Before introducing minimal block detection method, a lemma and a theorem are given as follows.

\begin{lemma}
\label{lemma1}
Suppose that a GS system $f\left( X \right)$ could be initially written as $m$ additively separable blocks ($m \geqslant 1$). The test variable set ${X^t}$ is a subset of the repeated variable set ${X^r}$ of $f\left( X \right)$, namely ${X^t } \subseteq {X^r}$, if the following two conditions are satisfied.
\begin{enumerate}[1)]
\item When the variables belong to $X^t$ are all fixed, $f\left( X \right)$ can be rewritten as more than $m$ additively separable blocks.
\item When the variables belong to ${{\tilde X}^t }$, in which ${{\tilde X}^t }\ne {X^t }$ is the element of the power set $ \mathscr{P}{X^t }$ of ${X^t }$, are all fixed, $f\left( X \right)$ can not be rewritten as more than $m$ additively separable blocks.
\end{enumerate}
\end{lemma}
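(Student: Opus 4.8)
The plan is to recast the operational statement as a structural fact: the number of additively separable blocks of a GS system is governed \emph{only} by its repeated variables, after which the lemma falls out of a minimality argument. First I would fix the meaning of ``additively separable blocks.'' By the standard mixed--partial criterion, a smooth $f$ admits a splitting $\sum_\alpha g_\alpha$ over a partition of its variables exactly when $\partial^2 f/\partial u\,\partial v\equiv 0$ for every $u,v$ in different parts, so the \emph{maximal} additive decomposition is the partition induced by the equivalence relation generated by ``$u\sim v$ whenever $\partial^2 f/\partial u\,\partial v\not\equiv 0$.'' I would then show that for $f=c_0+\sum_{i=1}^{M}c_i\,\tilde\omega_i(X_i^r)\tilde\psi_i(\bar X_i^r)$ this partition is exactly the vertex partition into connected components of the graph $G$ on the minimal blocks $\{1,\dots,M\}$ with an edge $i\sim j$ iff $X_i^r\cap X_j^r\neq\emptyset$. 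The forward direction is direct: a shared repeated variable makes some $\partial^2\varphi_i/\partial u\,\partial v\not\equiv 0$, pinning $\varphi_i$ and $\varphi_j$ to one part. For the converse I would use that the $\bar X_i^r$ partition $\bar X^r$, so two distinct components share no variable at all (repeated variables live inside a single component by construction, non-repeated variables inside a single block), whence the block-sum over a component is a genuine function of its own disjoint variable set.

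The decisive observation is that the component count of $G$ --- i.e. the number of additive blocks --- is completely insensitive to the non-repeated variables. A non-repeated variable $x$ occupies a single minimal block $\varphi_{i_0}$ and thus contributes \emph{no} edge of $G$; consequently, fixing $x$ at a value (leaving $\tilde\psi_{i_0}$ nontrivial) deletes no edge and removes no vertex of $G$, so the number of connected components, and hence the number of additive blocks, is unchanged. More generally fixing variables only deletes edges of $G$, so the block count is monotone non-decreasing under fixing and can increase only when a shared repeated variable is removed.

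With this the lemma follows by contradiction. Assume conditions 1) and 2) hold but $X^t\not\subseteq X^r$, so some $x\in X^t$ is non-repeated. Set $\tilde X^t=X^t\setminus\{x\}\in\mathscr{P}X^t$ with $\tilde X^t\neq X^t$. Fixing $X^t$ equals fixing $\tilde X^t$ and then additionally fixing $x$, which is still confined to a single block after the intermediate fixing; by the invariance above this last step does not alter the block count, so fixing $\tilde X^t$ already produces the same number of blocks as fixing $X^t$, which by 1) exceeds $m$. This contradicts 2). Hence no element of $X^t$ is non-repeated, i.e. $X^t\subseteq X^r$. Note both hypotheses are needed: 1) supplies the increase while the minimality in 2) is exactly what rejects ``inert'' non-repeated variables (e.g. appending $V_\infty$ to $\{r,R\}$ in Example~\ref{ex2-3} still separates $f$, yet is not minimal).

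The step I expect to be the genuine obstacle is upgrading the invariance from a generic to a rigorous statement. Two degeneracies must be excluded. First, the clean identity ``additive block count $=$ components of $G$'' presumes no accidental cancellation in $\partial^2 f/\partial u\,\partial v=\sum_i c_i\,\partial^2\varphi_i/\partial u\,\partial v$ when a repeated variable sits in several common blocks; this requires a mild non-degeneracy hypothesis on the sub-functions, though it never interferes with the non-repeated case (a single term cannot cancel itself). Second, fixing $x$ at a zero of $\tilde\psi_{i_0}$ would annihilate block $i_0$ and could change the component count, so I would restrict to fixed values outside a measure-zero set --- the same sampling genericity already implicit in the BiCT detection of \citeauthor{Luo2017} (\citeyear{Luo2017}). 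Phrasing ``cannot be rewritten as more than $m$ additively separable blocks'' as a claim valid for generic fixed values is where the care is concentrated.
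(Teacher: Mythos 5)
Your proposal is correct and follows essentially the same route as the paper's own (much terser) proof sketch: both arguments rest on the single observation that non-repeated variables are irrelevant to the additive block count, so that the minimality demanded by condition 2) forces every element of $X^t$ to be repeated --- the paper compresses this into the remark that the difference set $X^t\backslash \tilde{X}^t$ ``must be a certain non-repeated variable set,'' while you run the identical logic as an explicit contradiction after deleting one non-repeated variable from $X^t$. Your mixed-partial/graph formalization and the genericity and non-degeneracy caveats are genuine rigor the paper omits, but they elaborate rather than alter the paper's argument.
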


\begin{proof}[Sketch of proof]
Based on the definition of the repeated variable, if the repeated variables of a certain block are all fixed, the non-repeated variables of this block are additively separable. Hence, condition 1) can be easily obtained. For condition 2), if there exist a set ${{\tilde X}^t }$ belongs to $ \mathscr{P}{X^t }$ that makes $f\left( X \right)$ be rewritten as more than $m$ additively separable sub-functions. The difference set $\mathscr{P}{X^t }\backslash{{\tilde X}^t }$ must be a certain non-repeated variable set.
\end{proof}

\begin{theorem}
\label{theorem1}
Suppose that the test variable sets $X_{\left( i \right)}^t \ne \emptyset$ for $i=1,2,\cdots,k$ satisfy the two conditions of Lemma (\ref{lemma1}) in a given $m$-blocks ($m \geqslant 2$) GS system $f\left( X \right)$. Then, the repeated variable set of $f\left( X \right)$ is ${X^r}=\bigcup\nolimits_{i = 1}^k {X_{\left( i \right)}^t}$.
\end{theorem}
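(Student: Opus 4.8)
The plan is to establish the set equality $X^r = \bigcup_{i=1}^{k} X_{(i)}^t$ by proving the two inclusions separately. The inclusion $\bigcup_{i=1}^{k} X_{(i)}^t \subseteq X^r$ is immediate: each $X_{(i)}^t$ satisfies the two conditions of Lemma \ref{lemma1}, so by that lemma $X_{(i)}^t \subseteq X^r$ for every $i$, and a union of subsets of $X^r$ is again a subset of $X^r$. This direction requires no new work beyond invoking Lemma \ref{lemma1}.

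The substantive direction is $X^r \subseteq \bigcup_{i=1}^{k} X_{(i)}^t$, i.e. every repeated variable lies in at least one test set. First I would fix an arbitrary $x \in X^r$ and use Definition \ref{def-4concepts}: being repeated, $x$ occurs in at least two minimal blocks, say $\varphi_a$ and $\varphi_b$. The key structural observation is that, as long as $x$ is left free, both $\varphi_a$ and $\varphi_b$ genuinely depend on $x$ (through their repeated-variable factors ${\tilde\omega}_a$ and ${\tilde\omega}_b$), so these two blocks cannot be placed in two distinct additively separable blocks; conversely, fixing $x$ removes this shared dependence and is therefore a necessary ingredient of any decomposition that separates $\varphi_a$ from $\varphi_b$. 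I would then exhibit a \emph{separating set} containing $x$ (a set whose fixing yields strictly more than $m$ additively separable blocks, as in condition 1 of Lemma \ref{lemma1}) --- for instance the repeated variables jointly responsible for the coupling among the blocks in which $x$ appears --- and shrink it to a minimal such set while keeping $x$, using the necessity just noted to guarantee $x$ is never discarded during the reduction. A minimal separating set satisfies both conditions of Lemma \ref{lemma1}, hence coincides with one of the $X_{(i)}^t$, giving $x \in X_{(i)}^t$.

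The main obstacle will be this reduction-to-minimality step when $x$ is shared among blocks that are themselves further coupled through \emph{other} repeated variables. In that case fixing $x$ alone need not increase the block count, so one cannot simply take the singleton $\{x\}$; the difficulty is to prove that there always exists a minimal separating set in which $x$ is indispensable, rather than one in which the required splitting is achieved entirely by other variables. I expect to handle this by induction on the number of blocks $m$, appealing to the multilevel (recursive) application of Lemma \ref{lemma1} that underlies the MBB procedure: once a first test set is fixed and $f$ decomposes into strictly more additively separable blocks, each resulting block is itself a smaller GS system, and $x$ --- still repeated inside whichever block retains it --- is captured by a test set at the next level. Verifying that the block count is monotone under additional fixing (so the reduction is well defined) and carefully assembling the per-level test sets is where the bulk of the rigor lies; with this in place the two inclusions together yield $X^r = \bigcup_{i=1}^{k} X_{(i)}^t$.
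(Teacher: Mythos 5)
Your first inclusion, $\bigcup_{i=1}^{k} X_{(i)}^t \subseteq X^r$, is correct and is exactly what Lemma \ref{lemma1} supplies. The genuine gap is in the reverse inclusion, and it sits precisely at the step you flag as the obstacle: the existence, for each repeated variable $x$, of a test set containing $x$ that satisfies \emph{both} conditions of Lemma \ref{lemma1}. Condition 2) of the lemma is minimality with respect to the \emph{total} number of additively separable blocks, whereas your ``necessity'' observation only shows that $x$ must be fixed in order to split the particular pair of minimal blocks $\varphi_a,\varphi_b$ that share it; these two notions come apart, and then the shrinking step discards $x$ --- or, worse, no admissible set containing $x$ exists at all. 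Concretely, consider the GS system
\begin{equation*}
f = \omega_1(a)\,\eta_1(b)\,\psi_1(u_1) + \omega_2(a)\,\eta_2(b)\,\psi_2(u_2) + \omega_3(a)\,\psi_3(u_3) + \omega_4(a)\,\psi_4(u_4) + \psi_5(u_5),
\end{equation*}
which is initially written as $m=2$ additively separable blocks (the first four minimal blocks are glued by $a$, while $\psi_5$ stands alone) and has $X^r=\{a,b\}$. Fixing $\{a\}$ alone already yields $4>2$ blocks, so $\{a\}$ satisfies both conditions. But then no set containing $b$ can satisfy condition 2) relative to the baseline $m=2$: any set whose fixing separates $\varphi_1$ from $\varphi_2$ must contain both $a$ and $b$, and its proper subset $\{a\}$ already raises the count. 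Hence, relative to a single fixed baseline $m$ --- which is how the theorem reads and how your first two paragraphs argue --- the union of all admissible test sets is $\{a\}\subsetneq X^r$, so the reduction-to-minimality argument cannot be repaired at one level.

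What saves the statement is exactly the multilevel reinterpretation you reach for in your final paragraph: the test sets must be taken sequentially, each satisfying Lemma \ref{lemma1} with the baseline \emph{updated} after the previously found sets are fixed, which is what Algorithm \ref{algorithm1} and the four-level search of Fig. \ref{2level} actually do (in the example above, once $a$ is fixed the baseline becomes $4$, and $\{b\}$ then becomes an admissible second-level set, giving $\{a\}\cup\{b\}=X^r$). In your write-up, however, this is offered as an expectation rather than carried out: the load-bearing claims --- monotonicity of the block count under additional fixing, that each block produced after fixing a test set is itself a GS system whose remaining repeated variables are still repeated inside it, and that every repeated variable is captured at some finite level --- are exactly where the whole difficulty of the theorem lives, so the proposal as written is a plan, not a proof. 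For fairness of comparison: the paper does not do this work either; its proof is the single remark that one may ``let $X^t=X^r$'' in Lemma \ref{lemma1}, which is only a gesture (indeed $X^r$ as a whole generally violates condition 2) of the lemma, since proper subsets of $X^r$, such as $\{S,\alpha_{CL\max}\}$ in Eq. (\ref{CLm}), already raise the block count). Your attempt is therefore more substantive than the paper's sketch, but the decisive step is missing from both, and in your case the single-baseline version of it is provably false.
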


\begin{proof}[Sketch of proof]
This theorem can be easily proved if we let $X^t=X^r$ in the Lemma (\ref{lemma1}).
\end{proof}

Based on the aforementioned theorem, the minimal block detection technique can be succinctly described by the following algorithm.

\begin{algorithm}[htb] 
\label{algorithm1}
\caption{Minimal block detection}
\begin{algorithmic}[1]
\State Input: Target model $f:{\mathbb{R}^n} \mapsto \mathbb{R}$; Variable set $X = \left\{ {{x_1},\cdots ,{x_n}} \right\}$; Sample data ${\mathbf{S}}\in{\mathbb{R}^{N \times n}}$
\State Let all variables be randomly sampled
\State Detect the additively separable blocks by BiCT
\State Update non-repeated variable sets of each block
\For{$i=1:n$} 
  \State Create $\binom{n}{i}$ test variable sets $X_{(j)}^t (j=1,2,\cdots,\binom{n}{i})$, which consist of all possible combinations of the set $X$ taken $i$ variables at a time
  \For{$j=1:\binom{n}{i}$}
    \State Keep the test variables belong to $X_{(j)}^t$ fixed; Re-generate ${{\mathbf{S'}}}$; Call BiCT
    \If{More blocks are detected by BiCT}
      \State Add all $ {x^t } \in X_{(j)}^t$ to $X^r$
      \State Update the non-repeated variable sets of each block
    \EndIf
  \EndFor 
\EndFor
\State Output: Repeated variable set $X^r$, and non-repeated variable sets of each minimal block $\bar X_i^r$, for $i=1,2,\cdots,m$
\end{algorithmic}
\end{algorithm}

\begin{example}
Fig. \ref{2level} illustrates an example of 4-level block search for Formula (\ref{CLm}). The Formula (\ref{CLm}) could be divided into 8 minimal blocks by 4 searches, in which the first search is conducted under no repeated variable fixed, and the later three searches are performed with the repeated variables in $X_{\left( 1 \right)}^t$, $X_{\left( 2 \right)}^t$ and $X_{\left( 3 \right)}^t$ fixed, respectively. Based on Theorem \ref{theorem1}, the repeated variable set of Formula (\ref{CLm}) can be derived by ${X^r} = X_{\left( 1 \right)}^t  \cup X_{\left( 2 \right)}^t  \cup X_{\left( 3 \right)}^t$.
\end{example}

\begin{figure}[h]
\centering
\includegraphics[width=0.98\linewidth]{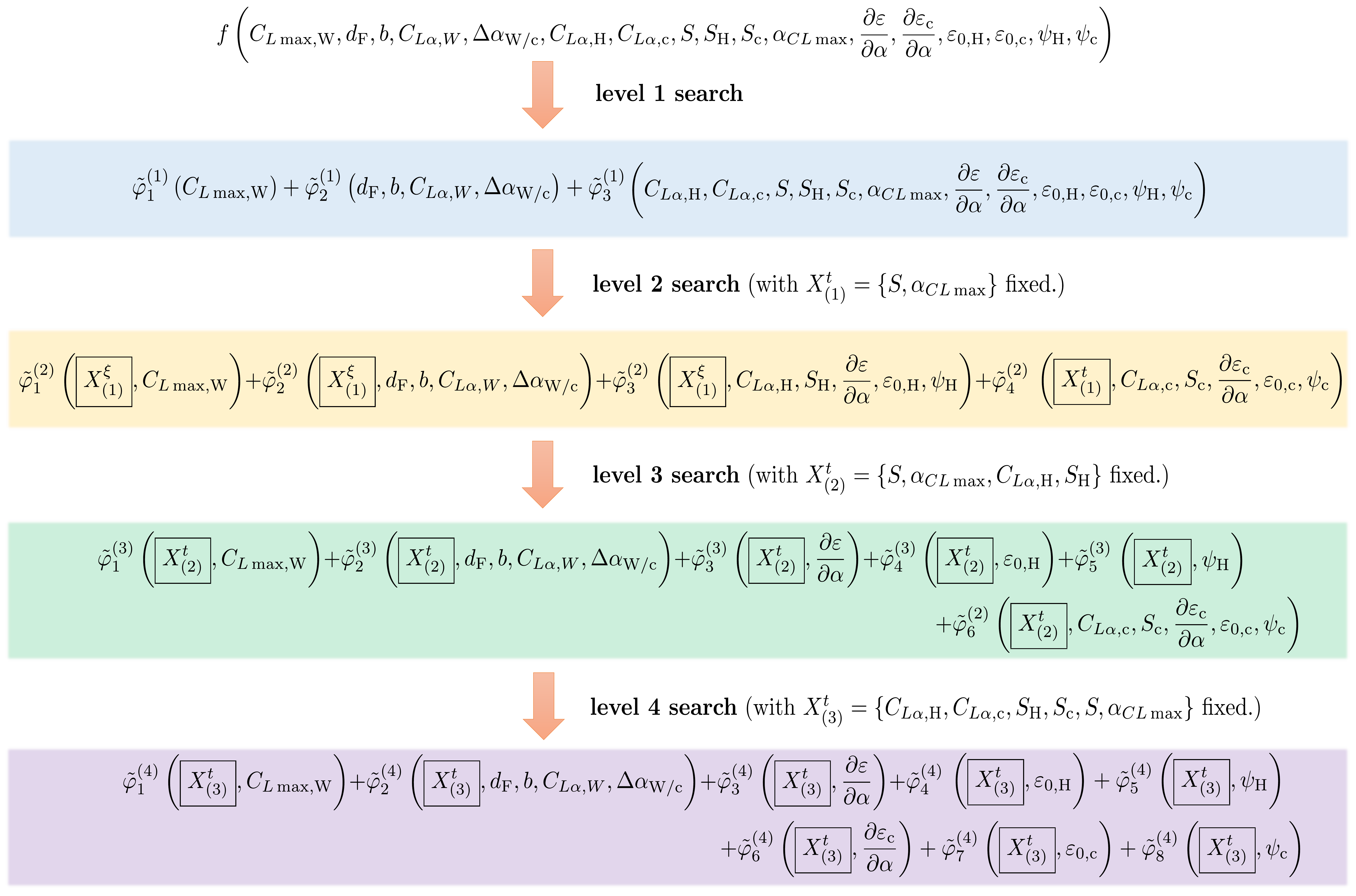}
\caption{Illustration of 4-level block search for Eq. (\ref{CLm}).}
\label{2level}
\end{figure}

\subsection{Factor detection}
\label{Section4-2}

After the sets ${X^r}$ and $\bar X_i^r$ for $i=1,2,\cdots,m$, are obtained by minimal block detection algorithm. The next step is to determine the repeated variable set of each block, namely $X_{i}^r$, and the separability of each block, namely $X_{i,j}^r$ and $\bar X_{i,k}^r$, for $j=1,2,\cdots,p_i$, $k=1,2,\cdots,q_i$. For non-repeated variables, note that in Eq. (\ref{eq-GSS}), factors ${{\psi _{i,k}}\left( {\bar X_{i,k}^r} \right)}$ are multiplicatively separable. The separability of the variables of ${\bar X_{i,k}^r}$ in the $i$-th block of Eq. (\ref{eq-GSS}) can be easily derived by BiCT method with all the variables in ${X^r}$ and $\bar X_\xi ^r\left( {\xi  \ne i} \right)$ fixed.

To determine the repeated variables of each block as well as their separability, two test functions ${{\tilde f}_1}$ and ${{\tilde f}_2}$ are formulated for BiCT. Those are
\begin{equation}
\label{sampfun1}
{\left. {f\left( X \right)} \right|_{{{\tilde x}^t } \to {x_\alpha },\bar x^r \to {x_\beta },\tilde {\bar {x}}^r \to {x_\alpha }}} \triangleq {{\tilde f}_1}\left( {{X^t }} \right)
\end{equation}
and
\begin{equation}
\label{sampfun2}
{\left. {f\left( X \right)} \right|_{{x^t } \to {x_\alpha },\bar x^r \to {x_\beta }, \tilde {\bar {x}}^r \to {x_\alpha }}} \triangleq {{\tilde f}_2}\left( {{{\tilde X}^t }} \right),
\end{equation}
where $x_1^t,x_2^t, \cdots$ are called test variables, which belong to the test variable set ${X^t }$, such that ${x^t } \in {X^t } \subset {X^r}$. Variables $\tilde x_1^t,\tilde x_2^t, \cdots$ belong to ${{\tilde X}^t}$, where ${{\tilde X}^t}$ is the complementary set of the test variable set $X^t$ in the repeated variable set $X^r$, such that ${{\tilde x}^t} \in {{\tilde X}^t} = {\complement _{{X^r}}}{X^t}$. $\bar x_1^r,\bar x_2^r, \cdots$ are the non-repeated variables of the $i$-th block, such that $\bar x^r \subset \bar X_i^r$. $\tilde {\bar {x}}_1^r,\tilde {\bar {x}}_2^r,\cdots$ are the non-repeated variables of all the blocks except the $i$-th block, such that $\tilde {\bar {x}}^r \subset {\complement _{{{\bar X}^r}}}\bar X_i^r$. ${x_\alpha }$ and ${x_\beta }$ are two certain points which will keep fixed in BiCT.

In order to eliminate the effects of other blocks when determining the separability of the repeated variables in the $i$-th block, 4 groups involving 8 times samplings should be performed to Eq. (\ref{sampfun1}) and (\ref{sampfun2}), respectively. The first sampling group are conducted as follows. Set $x_\alpha$ to the point ${x_{{F}}}$, and set $x_\beta$ to ${x_{{A_1}}}$ and ${x_{{A_2}}}$, respectively. Let variables ${{\bar x}_\xi }$ be randomly sampled. Then, two function-value vectors ${{\mathbf{f}}^{{A_1}}}$ and ${{\mathbf{f}}^{{A_2}}}$ are obtained. Let ${{\mathbf{f}}^A} = {{\mathbf{f}}^{{A_1}}} - {{\mathbf{f}}^{{A_2}}}$. The rest 3 groups of sampling are simply interpreted as follows.
\[{{\mathbf{f}}^\xi } = {{\mathbf{f}}^{{\xi _1}}} - {{\mathbf{f}}^{{\xi _2}}}, \quad \text{where}\ {x_\alpha } \to {x_F},{x_\beta } \to {x_{{\xi _1}}},{x_{{\xi _2}}},\quad \text{for}\ \xi \to B,C,D\]

\begin{theorem}
\label{theorem2}
The test variables $x_1^t,x_2^t, \cdots  \in {X^t}\subset {X^r}$ are separable, and the factor ${\omega _{i, \cdot }}\left( {{X^t}} \right)$ involves in the $i$-th block if and only the following two conditions are satisfied.
\begin{enumerate}[1)]
\item Vectors ${{\mathbf{f}}^A}$ and ${{\mathbf{f}}^B}$ are linear independent, meanwhile ${{\mathbf{f}}^C}$ and ${{\mathbf{f}}^D}$ are linear independent.
\item Both ${{\mathbf{f}}^A}$ and ${{\mathbf{f}}^B}$ are not constant vectors.
\end{enumerate}
\end{theorem}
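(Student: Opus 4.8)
The plan is to prove the two directions of the equivalence by substituting the generalized separable form (\ref{eq-GSS}) into the four difference vectors and then reducing the statement to the bi-correlation criterion of BiCT (\citeauthor{Luo2017}, \citeyear{Luo2017}) applied to a single block. The first and decisive step is \emph{isolation}, which is the whole point of grouping the eight samplings into differences. By Definition (\ref{def-GSS}) the non-repeated variables $\bar X_i^r$ of the $i$-th block occur in no other block and not in $c_0$; therefore, evaluating (\ref{sampfun1}) with $x_\beta$ set successively to $x_{A_1}$ and $x_{A_2}$ --- all remaining substitutions held fixed --- changes only the $i$-th summand. Subtracting annihilates $c_0$ and every block $\ell\neq i$ and leaves
\[ \mathbf{f}^{A}=c_i\big[\tilde\psi_i(x_{A_1})-\tilde\psi_i(x_{A_2})\big]\,\mathbf{w}_i , \]
where $\mathbf{w}_i$ is the sample vector of $\tilde\omega_i$ taken under the substitution of (\ref{sampfun1}); the same computation produces $\mathbf{f}^{B},\mathbf{f}^{C},\mathbf{f}^{D}$. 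I would make this rigorous by choosing the probe points so that the scalar prefactor $\tilde\psi_i(x_{A_1})-\tilde\psi_i(x_{A_2})$ (and its analogues) is nonzero, which is possible generically whenever $\tilde\psi_i$ is non-constant, guaranteeing that each $\mathbf{f}^{\xi}$ genuinely represents the block $\tilde\omega_i$ rather than a cancelled quantity.

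Once every vector is expressed through the single factor $\tilde\omega_i$, the sampling scheme (\ref{sampfun1})--(\ref{sampfun2}) is exactly the pair of one-sided samplings used by BiCT: $\tilde f_1$ lets $X^t$ vary with its complement frozen, while $\tilde f_2$ lets the complementary repeated set $\tilde X^t$ vary with $X^t$ frozen. The forward direction --- supposing $X^t$ splits off as a factor $\omega_{i,\cdot}(X^t)$ of block $i$ --- is then a direct computation: substituting $\tilde\omega_i=\omega_{i,\cdot}(X^t)\cdot(\text{remaining factors})$ into the isolated vectors and verifying conditions 1) and 2). In particular, condition 2) is the statement that $X^t$ actually appears in block $i$: if it did not, $\tilde\omega_i$ would be independent of $X^t$ and $\mathbf{f}^{A},\mathbf{f}^{B}$ would reduce to constant vectors, so non-constancy exactly excludes this degenerate possibility.

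The converse is the substantive part and the main obstacle. Assuming conditions 1) and 2), I would invoke the bi-correlation criterion to deduce that $\tilde\omega_i$ must factor multiplicatively through $X^t$, i.e. that $\omega_{i,\cdot}(X^t)$ is a genuine factor of the $i$-th block; the difficulty is to show that the prescribed (in)dependence pattern \emph{forces} this factorization rather than merely being produced by it. This requires ruling out accidental rank collapses due to an unlucky choice of the frozen point $x_F$ or of the probe points $x_{A_1},x_{A_2},\dots$, which I would dispatch by a genericity (measure-zero) argument on the admissible sampling points. A second, more structural obstacle is that the whole reduction is legitimate only after isolation: because repeated variables are shared across several blocks, the single-block BiCT reasoning can be applied to block $i$ only once the $x_\beta$-differencing has removed all other blocks, and I would also use Lemma (\ref{lemma1}) to ensure at the outset that $X^t\subseteq X^r$, so that the test variables are indeed repeated variables and the construction of $\tilde f_1,\tilde f_2$ is meaningful.
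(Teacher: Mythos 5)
Your overall strategy coincides with the paper's own proof, which is nothing more than the reduction you describe: isolate the $i$-th block and invoke Theorem~1 of \citeauthor{Luo2017} (\citeyear{Luo2017}) (the BiCT criterion), with condition~2) handled exactly as you handle it, via Remark~\ref{remark2} (constant $\mathbf{f}^A,\mathbf{f}^B$ means block $i$ has no factor in $X^t$, which is then set to $1$). So there is no divergence in approach; the difference is that you try to carry the reduction out explicitly, and that is where a genuine problem appears.

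The problem sits inside your own isolation formula. You correctly derive
\[
\mathbf{f}^{A}=c_i\bigl[\tilde\psi_i(x_{A_1})-\tilde\psi_i(x_{A_2})\bigr]\,\mathbf{w}_i,
\qquad
\mathbf{w}_i=\bigl[\tilde\omega_i\bigl(X^t_j,\,x_F\bigr)\bigr]_{j=1,\dots,N},
\]
but look at what it says: the probe points $x_{A_1},x_{A_2}$ enter only through a scalar prefactor, and the frozen value $x_F$ of the complementary repeated set $\tilde X^t$ is the same in all four groups. Hence $\mathbf{f}^{B}$ is a scalar multiple of the very same vector $\mathbf{w}_i$, so $\mathbf{f}^A$ and $\mathbf{f}^B$ are \emph{automatically} linearly dependent, whether or not $\tilde\omega_i$ factors through $X^t$ (and if you instead re-sample $X^t$ afresh in each group, the two vectors live on unrelated sample grids and their linear (in)dependence is equally uninformative). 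Your ``direct computation'' in the forward direction therefore cannot verify condition~1) as stated; it refutes it. More seriously, the factorization property you are trying to characterize never enters these formulas at all, so no genericity argument in the converse can recover it: a BiCT-type test needs two slices of $\tilde\omega_i$ taken at two \emph{different} frozen values of $\tilde X^t$ --- i.e., the groups must differ in $x_\alpha$, not merely in $x_\beta$ --- and then separability of $X^t$ corresponds to linear \emph{dependence} of those slices, exactly as in Theorem~1 of \citeauthor{Luo2017} (\citeyear{Luo2017}). To complete the proof you must either re-read the sampling scheme (\ref{sampfun1})--(\ref{sampfun2}) in that way or observe that condition~1) cannot be the right criterion as written. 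The paper's two-sentence sketch conceals this difficulty by never writing the isolated vectors down; your attempt, by writing them down, exposes it, but then asserts that the verification goes through, which it does not.
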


\begin{proof}[Sketch of proof]
This theorem could be easily obtained from Theorem 1 in (\citeauthor{Luo2017}, \citeyear{Luo2017}). For condition 2), when both ${{\mathbf{f}}^A}$ and ${{\mathbf{f}}^B}$ are constant vectors, the $i$-th block does not involves repeated variable set factor ${\omega _{i, \cdot }}\left( {{X^t}} \right)$. Recall from Remark \ref{remark2} that the factor ${\omega _{i, \cdot }}\left( {{X^t}} \right)$ should set to ${\omega _{i, \cdot }}\left( {{X^t}} \right):=1$.
\end{proof}

The procedure of the factor detection becomes clear and is described in the next algorithm.

\begin{algorithm}[h]
\label{algorithm2}
\caption{Factor detection}
\begin{algorithmic}[1]
\State Input: Target system $f:{\mathbb{R}^n} \mapsto \mathbb{R}$; Sample data ${\mathbf{S}} \in {\mathbb{R}^{N \times n}}$; Set $X^r$ and $\bar X_i^r$, for $i=1,2,\cdots,m$
\For{$i=1:m$} 
  \State Let $s_i$ be the number of variables of ${\bar X_{i}^r}$, namely ${s_i}={\text{card}}\left( {\bar X_i^r} \right)$
  \For{$j=1:s_i$} 
    \State Create $\binom{s_i}{j}$ test variable set ${X_{(k)}^t}$, $k=1,2,\cdots,\binom{s_i}{j}$, which consist of all possible combinations of the set ${\bar X_{i}^r}$ taken $j$ variables at a time; Let $\text{Count}1=1$
    \For{$k=1:\binom{s_i}{j}$}
      \State Keep the test variable in ${X_{(k)}^t}$ and $X^r$ fixed; Re-generate ${{\mathbf{S'}}}$; Call BiCT
      \If{${X_{(k)}^t}$ is separable detected by BiCT}
        \State ${\bar X_{i,\text{Count}1}^r}:={X_{(k)}^t}$; $\text{Count}1:=\text{Count}1+1$
      \EndIf
    \EndFor
  \EndFor
  \State Let $r_i$ be the number of variables of ${X_{i}^r}$ (i.e., $X^r$), namely ${r_i}={\text{card}}\left( { X_i^r} \right)$
  \For{$j=1:r_i$} 
    \State Create $\binom{r_i}{j}$ test variable set ${X_{(k)}^t}$, $k=1,2,\cdots,\binom{r_i}{j}$, which consist of all possible combinations of the set ${X_{i}^r}$ taken $j$ variables at a time; Let $\text{Count}2=1$
    \For{$k=1:\binom{r_i}{j}$}
      \State Generate test functions ${{\tilde f}_1}$ and ${{\tilde f}_2}$; Obtain ${{\mathbf{f}}^A}$, ${{\mathbf{f}}^B}$, ${{\mathbf{f}}^C}$ and ${{\mathbf{f}}^D}$; Call BiCT
      \If{${X_{(k)}^t}$ satisfy Theorem \ref{theorem2}}
        \State ${ X_{i,\text{Count}2}^r}:={X_{(k)}^t}$; $\text{Count}2:=\text{Count}2+1$
      \EndIf
    \EndFor
  \EndFor
\EndFor
\State Output: Set ${X_{i,j}^r}$ and ${\bar X_{i,k}^r}$, for $j = 1,2, \cdots ,{p_i}$ and $k = 1,2, \cdots ,{q_i}$
\end{algorithmic}
\end{algorithm}

\subsection{Optimization}
\label{Section4-3}

The optimization process of MBB mainly includes two parts, namely inner optimization and outer optimization. The inner optimization will be invoked to determine the function model and coefficients of the factors ${{\omega _{i,j}}}$ and ${{\psi _{i,k}}}$. Fortunately, many state-of-the-art optimization techniques, e.g., parse-matrix evolution (\citeauthor{Luo2012-PME}, \citeyear{Luo2012-PME}), low dimensional simplex evolution (\citeauthor{Luo2012-LDSE}, \citeyear{Luo2012-LDSE}), artificial bee colony programming (\citeauthor{Karaboga2012}, \citeyear{Karaboga2012}), etc. can all be easily used to optimize the factors. Then, the optimized factors of each minimal block are multiplied together to produce minimal blocks.

The outer optimization aims at combining the minimal blocks together with the proper global parameters $c_i$. The whole process of MBB for modeling a GS system can be briefly described as follows:

\begin{description}
\item {Step 1.} (Minimal block detection) Partition a GS system into a number of minimal blocks with all the repeated variables fixed (see Algorithm \ref{algorithm1});
\item {Step 2.} (Factor detection) Divide each minimal block into factors (see Algorithm \ref{algorithm2});
\item {Step 3.} (Factor determination) Determine the factors by employing an optimization engine;
\item {Step 4.} (Global assembling) Combine the optimized factors into minimal blocks multiplicatively, further into a optimization model linearly with proper global parameters.
\end{description}

The flowchart of MBB could be briefly illustrated in Fig. \ref{workflow}.

\begin{figure}[h]
\centering
\includegraphics[width=0.98\linewidth]{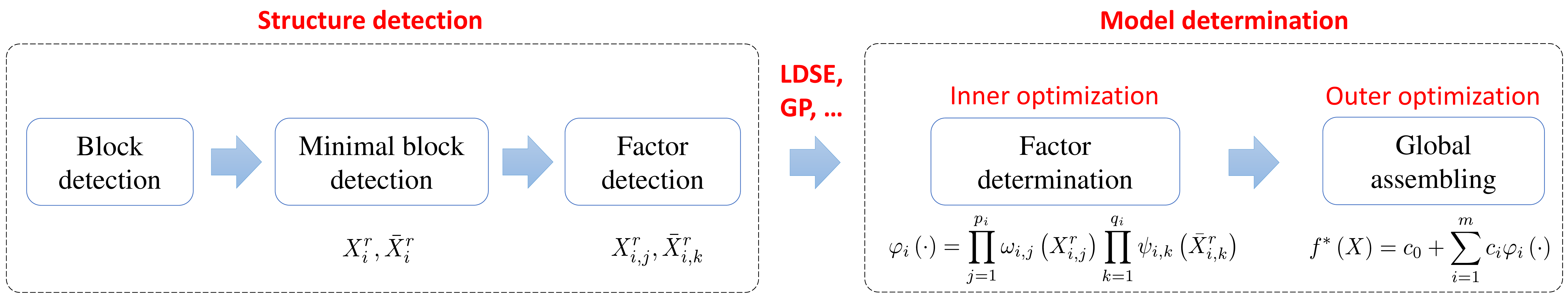}
\caption{Flowchart of MBB.}
\label{workflow}
\end{figure}

To enhance the stability and efficiency of MBB, the distribution of sample points should be as uniform as possible. Therefore, controlled sampling methods such as Latin hypercube sampling (\citeauthor{Beachkofski2002}, \citeyear{Beachkofski2002}) and orthogonal sampling (\citeauthor{Steinberg2006}, \citeyear{Steinberg2006}) are preferred for sample generation.

The proposed technique is described with functions with explicit expressions. While in practical applications, no explicit expression is available. In fact, for data-driven modeling problems, a surrogate model (\citeauthor{Forrester2009}, \citeyear{Forrester2009}) of black-box type could be established as the underlying target function in advance.

\section{Numerical results and discussion}
\label{Section5}
In our implementation, a kind of global optimization method, low dimensional simplex evolution (LDSE) (\citeauthor{Luo2012-LDSE}, \citeyear{Luo2012-LDSE}), is chosen as the optimization engine. LDSE is a hybrid evolutionary algorithm for continuous global optimization. The performances including `structure optimization' and `coefficient optimization' capabilities of LDSE powered MBB are tested by comparing with a state-of-the-art software, Eureqa (\citeauthor{Schmidt2009}, \citeyear{Schmidt2009}), which is a data-driven fitting tool based on genetic programming (GP). Eureqa was developed at the Computational Synthesis Lab at Cornell University by H. Lipson. Two test groups: 10 toy cases (see \ref{appendixA}) and 4 real-world cases (see Section \ref{Section2-1}) are taken into account.

\subsection{Toy cases}
\label{Section5-1}

In this test group, 10 problems (see \ref{appendixA}) are chosen to test MBB's performance. In Table \ref{table2}, the case number, dimension, number of sampling points, number of minimal blocks, number of factors and mean square error are denoted as Case No., Dim, No.samples, No.blocks, No.factors and MSE, respectively. 

\subsubsection{Control parameter setting}
\label{Section5-1-1}

The calculation conditions are set as follows. The number of sampling points for each independent variable is 200. The regions for cases 1-5 and 7-10 are chosen as $[-3,3]$, while case 6 is $[1,3]$. The control parameters in LDSE are set as follows. The upper and lower bounds of fitting parameters is set as $-50$ and $50$. The population size $N_p$ is set to $N_p=10+10d$, where $d$ is the dimension of the problem. The maximum generations is set to $3N_p$. Note that the maximum number of partially separable variables in all target models is 2 in our tests. Hence, our uni-variable and bi-variables function library of MBB could be set as in Table \ref{table1}, in which $m_1,m_2$ are the parameters to be fitted. Sequence search and optimization method is suitable for global optimization strategy. The search will exit immediately if the mean square error is small enough (MSE $\leqslant {\varepsilon _{{\text{target}}}}$), and the tolerance (fitting error) is ${\varepsilon _{{\text{target}}}} = {10^{ - 6}}$. In order to reduce the effect of randomness, each test case is executed 20 times.

\begin{table}[h]
\centering
\caption{Uni-variable and bi-variables preseted models in LDSE.}
\label{table1}
\begin{tabular}{lll}
\hline\hline
No. & Uni-variable model                                & Bi-variables model                                                                                                                                                                                                                             \\ \hline
1   & ${x^{{m_1}}}$           & ${m_1}{x_1} + {m_2}{x_2}$                                                                                                                                                                                            \\
2   & ${e^{{m_1}x}}$          & ${e^{{m_1}{x_1}{x_2}}}$ \\
3   & $\sin \left( {{m_1}x + {m_2}} \right)$ & ${\left( {{x_1}/{x_2}} \right)^{{m_1}}} + {m_2}{\left( {{x_1}/{x_2}} \right)^{{m_3}}}+m_4$                                                                                                                                                                                              \\
4   & $\log \left( {{m_1}x + {m_2}} \right)$           & $\sin \left( {{m_1}{x_1} + {m_2}{x_2} + {m_3}{x_1}{x_2} + {m_4}} \right)$                                                                                                                                                                     \\ \hline\hline
\end{tabular}
\end{table}

\subsubsection{Numerical results and discussion}
\label{Section5-1-2}

Table \ref{table2} shows the comparative results of MBB's and Eureqa's mean performances of the 20 independent runs with different initial populations. Numerical results show that LDSE powered MBB has successfully recovered all the target models exactly in sense of double precision. Once the uni- and bi-variables models are preseted, sequence search method makes MBB easy to find the best regression model. In practical applications, more function models could be added to the function library of MBB, provided that they are needed. Count-downing to the second column of the Table \ref{table2}, Eureqa fails to reach the target accuracy in all 20 runs within the maximum generation 100,000 for Case 6-10. Hence it is difficult for Eureqa to model large scale problems rapidly.

The computing time (CPU time) of MBB consists three parts, $t=t_1+t_2+t_3$, where $t_1$ is for the separability detection, $t_2$ for factors modeling, and $t_3$ for global assembling. In \cite{Luo2017}, authors have demonstrated that both the separability detection and function recover processes are double-precision operations and thus cost much less time than the factor determination process. That is, $t \approx t_2$. It is very easy to see that the computational efficiency of LDSE powered MBB is higher than Eureqa's. Note that LDSE powered MBB is executed on a single processor, while Eureqa is executed in parallel on 8 processors.

\begin{threeparttable}[h]
\scriptsize
\centering
\caption{Comparative results of the mean performances between LDSE powered MBB and Eureqa for modeling 10 toy cases listed in \ref{appendixA}.}
\label{table2}
\begin{tabular}{llllllll|lll}
\hline\hline
\multirow{2}{*}{\begin{tabular}[c]{@{}l@{}}Case\\ No.\end{tabular}} & \multirow{2}{*}{Dim} & \multirow{2}{*}{\begin{tabular}[c]{@{}l@{}}No.\\ samples\end{tabular}} & \multicolumn{5}{l|}{LDSE powered MBB}                                                                                                                                                                                                                                                     & \multicolumn{3}{l}{Eureqa}                                                                                                     \\ \cline{4-11} 
                                                                    &                      &                                                                        & \begin{tabular}[c]{@{}l@{}}Repeated\\ variable\end{tabular} & \begin{tabular}[c]{@{}l@{}}No.\\ block\end{tabular} & \begin{tabular}[c]{@{}l@{}}No.\\ factor\end{tabular} & \begin{tabular}[c]{@{}l@{}}CPU\\ time\tnote{1}\end{tabular} & MSE                                          & \begin{tabular}[c]{@{}l@{}}CPU\\ time\tnote{2}\end{tabular} & MSE                                             & Remarks                 \\ \hline
1                                                                   & 2                    & 400                                                                    & None                                                        & 1                                                   & 2                                                    & \textbf{7s}                                      & $\leqslant {\varepsilon _{{\text{target}}}}$ & 7s                                                 & $\leqslant {\varepsilon _{{\text{target}}}}$    & Solutions are all exact \\
2                                                                   & 3                    & 600                                                                    & None                                                        & 2                                                   & 2                                                    & \textbf{9s}                                      & $\leqslant {\varepsilon _{{\text{target}}}}$ & $>$ 4m 12s                                             & $\left[ {0,2.33} \right] \times {10^{ - 8}}$ & 10 runs failed\tnote{3}           \\
3                                                                   & 3                    & 600                                                                    & None                                                        & 2                                                   & 3                                                    & \textbf{9s}                                      & $\leqslant {\varepsilon _{{\text{target}}}}$ & $>$ 1m 9s                                              & $\leqslant {\varepsilon _{{\text{target}}}}$    & 2 runs failed           \\
4                                                                   & 3                    & 600                                                                    & $x_3$                                                       & 2                                                   & 4                                                    & \textbf{11s}                                     & $\leqslant {\varepsilon _{{\text{target}}}}$ & 55s                                                & $\leqslant {\varepsilon _{{\text{target}}}}$    & Solutions are all exact \\
5                                                                   & 4                    & 800                                                                    & $x_4$                                                       & 2                                                   & 5                                                    & \textbf{14s}                                     & $\leqslant {\varepsilon _{{\text{target}}}}$ & $>$ 2m 28s                                             & $\leqslant {\varepsilon _{{\text{target}}}}$    & 3 runs failed           \\
6                                                                   & 5                    & 1000                                                                   & $x_5$                                                       & 4                                                   & 6                                                    & \textbf{21s}                                     & $\leqslant {\varepsilon _{{\text{target}}}}$ & $\gg$ 6m 25s                                       & $\left[ {4.79,14.2} \right] \times {10^{ - 6}}$ & All runs failed         \\
7                                                                   & 5                    & 1000                                                                   & $x_4,x_5$                                                   & 3                                                   & 7                                                    & \textbf{16s}                                     & $\leqslant {\varepsilon _{{\text{target}}}}$ & $\gg$ 8m 38s                                       & $\left[ {4.05,7.68} \right] \times {10^{ - 4}}$ & All runs failed         \\
8                                                                   & 5                    & 1000                                                                   & $x_4$                                                       & 3                                                   & 6                                                    & \textbf{15s}                                     & $\leqslant {\varepsilon _{{\text{target}}}}$ & $\gg$ 6m 44s                                       & $\left[ {2.89,122.86} \right] \times {10^{ - 2}}$ & All runs failed         \\
9                                                                   & 6                    & 1200                                                                   & None                                                        & 1                                                   & 4                                                    & \textbf{9s}                                      & $\leqslant {\varepsilon _{{\text{target}}}}$ & $\gg$ 6m 59s                                       & $\left[ {1.4,8.54} \right] \times {10^{ - 1}}$ & All runs failed         \\
10                                                                  & 7                    & 1400                                                                   & $x_7$                                                       & 2                                                   & 6                                                    & \textbf{11s}                                     & $\leqslant {\varepsilon _{{\text{target}}}}$ & $\gg$ 6m 51s                                       & $\left[ {7.58,399.5} \right] \times {10^{ - 4}}$ & All runs failed         \\ \hline\hline
\end{tabular}
\begin{tablenotes}\footnotesize
\item[1] LDSE powered MBB is implemented in MATLAB and executed on a single processor.
\item[2] Eureqa is implemented in C language and executed in parallel on 8 processors.
\item[3] Maximum generation is set to 100,000.
\end{tablenotes}
\end{threeparttable}

\subsection{Real-world cases}
\label{Section5-2}

In this test group, numerical experiments on 4 real-world problems discussed in Section \ref{Section2-1} are conducted, namely Eq. (\ref{nozzle}), Eq. (\ref{CL}), Eq. (\ref{flowequ}) and Eq. (\ref{CLm}). The equations are re-written in \ref{appendixB}. This test group helps us evaluate MBB's potential capability to be applied in practical applications.

\subsubsection{Control parameter setting}
\label{Section5-2-1}
For Eq. (\ref{nozzle}), the sample set consists 100 observations uniformly distributed in a box in $R^{3}$ (i.e., $A^*=0.5:1.5$ $\text{m}^2$; $p_0=4\times 10^5:6\times 10^5$ Pa; $T_0=250:260$ K). The specific gas constant $R$ is set to $R=287$ $\text{J}/(\text{kg}\cdot \text{K})$, and the ratio of specific heats $\gamma$ is set to $\gamma=1.4$ while detecting and modeling Eq. (\ref{nozzle}).

For Eq. (\ref{CL}), the sample set consists 100 observations uniformly distributed in a box in $R^{6}$ (i.e., ${C_{L\alpha}}=0.4:0.8$; $\alpha=5:10$ degree; ${{C_{L{\delta _e}}}}=0.4:0.8$; ${{\delta _e}}=5:10$ degree; ${S_{{\text{HT}}}}=1:1.5$ $\text{m}^2$; ${{S_{{\text{ref}}}}}=5:7$ $\text{m}^2$). The zero-lift angle of attack ${{\alpha _0}}$ is $- 2^\circ$.

For Eq. (\ref{flowequ}), the sample set consists 100 observations uniformly distributed in a box in $R^{5}$ (i.e., ${{V_\infty }}=60:65$; m/s; $\theta=30:40$ degree; $r=0.2:0.5$ m; $R=0.5:0.8$ m; $\Gamma=5:10$ $\text{m}^2$/s).

For Eq. (\ref{CLm}), the sample set consists 100 observations uniformly distributed in a box in $R^{18}$ (i.e., ${C_{L\max ,{\text{W}}}}=0.4:0.8$; ${C_{L\alpha ,W}}={C_{L\alpha ,{\text{H}}}}={C_{L\alpha ,W}}=2:5$; ${{d_{\text{F}}}}=3:4$ ; $b=20:30$ $\text{m}$; ${{S_{\text{H}}}}={{S_{\text{c}}}}=1:1.5$ $\text{m}^2$; $S=5:7$ $\text{m}^2$; ${{\alpha _{CL\max }}}=10:20$ degree; ${{\varepsilon _{0,{\text{H}}}}}={{\varepsilon _{0,{\text{c}}}}}=2:4$; ${{\psi _{\text{H}}}}={{\psi _{\text{c}}}}=1:2$). In practical, the design parameter ${\partial \varepsilon /\partial \alpha }$ and ${\partial {\varepsilon _{\text{c}}}/\partial \alpha }$ should be fixed to a certain constant. However, in order to take the effect of the design parameters into consideration, we regard the two partial derivatives as two independent variables, namely ${\partial \varepsilon /\partial \alpha }={\partial {\varepsilon _{\text{c}}}/\partial \alpha }=0.5:1.5$ degree$^{-1}$.

The control parameters and the uni-variable and bi-variables model library in LDSE are set the same as the first test group. Similarly, the search will exit immediately if the mean square error is small enough (MSE $\leqslant {\varepsilon _{{\text{target}}}}={10^{ - 8}}$). In order to reduce the effect of randomness, each test case is executed 20 times, and the averaged values are reported.

\subsubsection{Numerical results and discussion}
\label{Section5-2-2}

The results of this test group help us to further analyze the performances of LDSE powered MBB. Table \ref{table3} shows MBB's results of the minimal block detections for the 4 cases. Table \ref{table4} shows the comparative results of MBB's and Eureqa's mean performances of the 20 independent runs. In Table \ref{table3}, we can see that the minimal blocks are detected correctly and fitted accurately with MSE $\leqslant {\varepsilon _{{\text{target}}}}={10^{ - 8}}$ for all 20 runs. 

Meanwhile, the comparison of MSEs in Table \ref{table4} indicates that LDSE powered MBB has much lower MSE at all cases. Note that the proposed approach enables us to decompose a large problem into many small problems, and at the same time ensures that the accuracy obtained for the local fits is carried over to the global fit.  From all of the results above, we can draw a conclusion that the LDSE powered MBB has a good capability of detecting the complex structures of GS systems, and modeling complex practical problems. Good performance for modeling complex system and the capability of `structure optimization' and `coefficient optimization' show the potential of MBB to be applied in practical applications.

\begin{threeparttable}[h]
\centering
\caption{Mean performance of LDSE powered MBB for modeling 4 real-world examples (minimal block determination) in \ref{appendixB}.}
\label{table3}
\begin{tabular}{lll}
\hline\hline
Target minimal block                                                                                            & CPU Time\tnote{1} & Result Expression                                                                                                                                                                        \\ \hline
\emph{Case 11: Eq. (\ref{nozzle})}                                                                                                     &          &                                                                                                                                                                                          \\
${\varphi _1}\left( {{p_0},{A^*},{T_0}} \right)$                                                        & 7s       & ${\varphi _1} = 0.04 * {p_0} * {A^ * } * {T_0}^{ - 0.5}$                                                                                                                                 \\ \hline
\emph{Case 12: Eq. (\ref{CL})}                                                                                                     &          &                                                                                                                                                                                          \\
${\varphi _1}\left( {{C_{L\alpha }},\alpha } \right)$                                       & 5s       & ${\varphi _1} = {C_{L\alpha }} * \left( {\alpha - 2} \right)$                                                                                                                          \\
${\varphi _2}\left( {{C_{L{\delta _e}}},{\delta _e},{S_{{\text{HT}}}},{S_{{\text{ref}}}}} \right)$      & 8s       & ${\varphi _2} = {C_{L{\delta _e}}} * {\delta _e} * {S_{{\text{HT}}}} * {1 \mathord{\left/ {\vphantom {1 {{S_{{\text{ref}}}}}}} \right. \kern-\nulldelimiterspace} {{S_{{\text{ref}}}}}}$ \\ \hline
\emph{Case 13: Eq. (\ref{flowequ})}                                                                                                     &          &                                                                                                                                                                                          \\
${\varphi _1}\left( {{V_\infty },r,\sin \theta ,R} \right)$                                             & 7s       & ${\varphi _1} = {V_\infty } * \sin \theta * \left( {r - {{{R^2}} \mathord{\left/ {\vphantom {{{R^2}} r}} \right. \kern-\nulldelimiterspace} r}} \right)$                                 \\
${\varphi _2}\left( {\Gamma ,r,R} \right)$                                                              & 5s       & ${\varphi _2} = 0.1592 * \Gamma * \ln \left( {{r \mathord{\left/ {\vphantom {r R}} \right. \kern-\nulldelimiterspace} R}} \right)$                                                       \\ \hline
\emph{Case 14: Eq. (\ref{CLm})}                                                                                                     &          &                                                                                                                                                                                          \\
${\varphi _1}\left( {{C_{L\max ,{\text{W}}}}} \right)$                                                  & 2s       & ${\varphi _1} = {C_{L\max ,{\text{W}}}}$                                                                                                                                                 \\
${\varphi _2}\left( {{d_{\text{F}}},b,{C_{L\alpha ,W}},\Delta {\alpha _{{\text{W/c}}}}} \right)$        & 5s       & ${\varphi _2} = {C_{L\alpha ,W}}*\Delta {\alpha _{{\text{W/c}}}} * \left[ {4 + 0.1 \left( {{d_{\text{F}}}/b} \right) -  {{\left( {{d_{\text{F}}}/b} \right)}^2}} \right]$      \\
${\varphi _3}\left( {{C_{L\alpha ,{\text{H}}}},{S_{\text{H}}},S,{\alpha _{CL\max }}} \right)$           & 7s       & ${\varphi _3} = {C_{L\alpha ,{\text{H}}}} * {S_{\text{H}}} * \left( {{1 \mathord{\left/
 {\vphantom {1 S}} \right.
 \kern-\nulldelimiterspace} S}} \right) * {\alpha _{CL\max }} * \left( {{{\partial \varepsilon } \mathord{\left/
 {\vphantom {{\partial \varepsilon } {\partial \alpha }}} \right.
 \kern-\nulldelimiterspace} {\partial \alpha }}} \right)$\tnote{1}                                                                                 \\
${\varphi _4}\left( {{C_{L\alpha ,{\text{H}}}},{S_{\text{H}}},S,{\varepsilon _{0,{\text{H}}}}} \right)$ & 8s       & ${\varphi _4} = {C_{L\alpha ,{\text{H}}}} * {S_{\text{H}}} * \left( {{1 \mathord{\left/
 {\vphantom {1 S}} \right.
 \kern-\nulldelimiterspace} S}} \right) * {\varepsilon _{0,{\text{H}}}}$                                                                       \\
${\varphi _5}\left( {{C_{L\alpha ,{\text{H}}}},{S_{\text{H}}},S,{\psi _{\text{H}}}} \right)$            & 8s       & ${\varphi _5} = {C_{L\alpha ,{\text{H}}}} * {S_{\text{H}}} * \left( {{1 \mathord{\left/
 {\vphantom {1 S}} \right.
 \kern-\nulldelimiterspace} S}} \right) * {\psi _{\text{H}}}$                                                                                  \\
${\varphi _6}\left( {{C_{L\alpha ,{\text{c}}}},{S_{\text{c}}},S,{\alpha _{CL\max }}} \right)$           & 10s      & ${\varphi _6} = {C_{L\alpha ,{\text{c}}}} * {S_{\text{c}}} * \left( {{1 \mathord{\left/
 {\vphantom {1 S}} \right.
 \kern-\nulldelimiterspace} S}} \right) * {\alpha _{CL\max }} * \left( {{{\partial {\varepsilon _{\text{c}}}} \mathord{\left/
 {\vphantom {{\partial {\varepsilon _{\text{c}}}} {\partial \alpha }}} \right.
 \kern-\nulldelimiterspace} {\partial \alpha }}} \right)$\tnote{2}                                                                                 \\
${\varphi _7}\left( {{C_{L\alpha ,{\text{c}}}},{S_{\text{c}}},S,{\varepsilon _{0,{\text{c}}}}} \right)$ & 8s       & ${\varphi _7} = {C_{L\alpha ,{\text{c}}}} * {S_{\text{c}}} * \left( {{1 \mathord{\left/
 {\vphantom {1 S}} \right.
 \kern-\nulldelimiterspace} S}} \right) * {\varepsilon _{0,{\text{c}}}}$                                                                       \\
${\varphi _8}\left( {{C_{L\alpha ,{\text{c}}}},{S_{\text{c}}},S,{\psi _{\text{c}}}} \right)$            & 8s       & ${\varphi _8} = {C_{L\alpha ,{\text{c}}}} * {S_{\text{c}}} * \left( {{1 \mathord{\left/
 {\vphantom {1 S}} \right.
 \kern-\nulldelimiterspace} S}} \right) * {\psi _{\text{c}}}$                                                                                  \\ \hline\hline
\end{tabular}
\begin{tablenotes}\footnotesize
\item[1] The CPU time refers to the time cost of the minimal block determination.
\item[2] The partial derivative is regard as one independent variable here.
\end{tablenotes}
\end{threeparttable}

\begin{table}[h]
\centering
\caption{Comparative results of the mean performances between LDSE powered MBB and Eureqa for modeling 4 real-world cases listed in \ref{appendixB}.}
\label{table4}
\begin{tabular}{llllll|lll}
\hline\hline
\multirow{2}{*}{\begin{tabular}[c]{@{}l@{}}Case\\ No.\end{tabular}} & \multirow{2}{*}{Dim} & \multicolumn{4}{l|}{LDSE powered MBB}                                                                                                                                                                           & \multicolumn{3}{l}{Eureqa}                                                                                        \\ \cline{3-9} 
                                                                    &                      & \begin{tabular}[c]{@{}l@{}}No.\\ block\end{tabular} & \begin{tabular}[c]{@{}l@{}}No.\\ factor\end{tabular} & \begin{tabular}[c]{@{}l@{}}CPU\\ time\end{tabular} & MSE                                          & \begin{tabular}[c]{@{}l@{}}CPU\\ time\end{tabular} & MSE                                          & Remarks       \\ \hline
11                                                                   & 3                    & 1                                                   & 3                                                    & 10s                                                & $\leqslant {\varepsilon _{{\text{target}}}}$ & 53s                                                & $\leqslant {\varepsilon _{{\text{target}}}}$ & 2 runs failed \\
12                                                                   & 6                    & 2                                                   & 6                                                    & 15s                                                & $\leqslant {\varepsilon _{{\text{target}}}}$ & $\gg$ 7m 3s                                                & $\left[ {2.04,5.63} \right] \times {10^{ - 4}}$ & All runs failed \\
13                                                                   & 5                    & 2                                                   & 6                                                    & 16s                                                & $\leqslant {\varepsilon _{{\text{target}}}}$ & $>$ 4m 41s                                                & $\left[ {0,1.96} \right] \times {10^{ - 5}}$ & 11 runs failed \\
14                                                                   & 18                   & 8                                                   & 31                                                   & 58s                                                & $\leqslant {\varepsilon _{{\text{target}}}}$ & $\gg$ 7m 12s                                                & $\left[ {3.75,11.17} \right] \times {10^{ - 1}}$ & All runs failed \\ \hline\hline
\end{tabular}
\end{table}

\section{Conclusion}
\label{Section6}
We have analyzed the different types of separability of some practical engineering problems and have established the mathematical model of the generalized separable system (GS system). In other to get the structure of the GS system, two types of variables in GS system have been identified, namely repeated variable and non-repeated variable. A new algorithm, multilevel block building (MBB), has also been proposed to decompose the GS system into a number of block, further into minimal blocks and factors. The minimal blocks and factors are optimized and assembled with a global optimization search engine, low dimensional simplex evolution (LDSE). The LDSE powered MBB is tested on several man-made test cases, as well as some real-world problems. Remarkable performance is concluded after comparing with a state-of-the-art data-driven fitting tool, Eureqa. Numerical results show the algorithm is effective, and can get the target function more rapidly and reliably. Good performance for modeling the complex problems show the potential of the GS system model and the proposed MBB algorithm to be applied in engineering. As a future work, it is planned to study on the robustness of the proposed algorithm on sample data with noises.

\section*{Acknowledgements}
This work was supported by the National Natural Science Foundation of China (Grant No. 11532014).

\appendix
\section{10 toy cases of Section \ref{Section5-1}}
\label{appendixA}
The target models which are tested in Section \ref{Section5-1} with all the minimal blocks boxed are given as follows:
\begin{description}
\item Case 1. $f\left( {\mathbf{x}} \right) = 0.5 * \boxed{{e^{{x_1}}} * \sin 2{x_2}}$, where ${x_i} \in \left[ { - 3,3} \right],i = 1,2.$

\item Case 2. $f\left( {\mathbf{x}} \right) = 2 * \boxed{\cos {x_1}} + \boxed{\sin \left( {3{x_2} - {x_3}} \right)}$, where ${x_i} \in \left[ { - 3,3} \right],i = 1,2,3.$

\item Case 3. $f\left( {\mathbf{x}} \right) = 1.2 + 10 * \boxed{\sin 2{x_1}} - 3 * \boxed{x_2^2 * \cos {x_3}}$, where ${x_i} \in \left[ { - 3,3} \right],i = 1,2,3.$

\item Case 4. $f\left( {\mathbf{x}} \right) = \boxed{{x_3} * \sin {x_1}} - 2 * \boxed{{x_3} * \cos {x_2}}$, where ${x_i} \in \left[ { - 3,3} \right],i = 1,2,3.$

\item Case 5. $f\left( {\mathbf{x}} \right) = 2 * \boxed{{x_1} * \sin {x_2} * \cos {x_4}} - 0.5 * \boxed{{x_4} * \cos {x_3}}$, where ${x_i} \in \left[ { - 3,3} \right],i = 1,2,3,4.$

\item Case 6. $f\left( {\mathbf{x}} \right) = 10 + 0.2 * \boxed{{x_1}} - 0.2 * \boxed{x_5^2 * \sin {x_2}} + \boxed{\cos {x_5} * \ln \left( {3{x_3} + 1.2} \right)} - 1.2 * \boxed{{e^{0.5{x_4}}}}$, where ${x_i} \in \left[ {1,4} \right],i = 1,2, \cdots ,5.$

\item Case 7. $f\left( {\mathbf{x}} \right) = 2 * \boxed{{x_4} * {x_5} * \sin {x_1}} - \boxed{{x_5} * {x_2}} + 0.5 * \boxed{{e^{{x_3}}} * \cos {x_4}}$, where ${x_i} \in \left[ { - 3,3} \right],i = 1,2, \cdots ,5.$

\item Case 8. $f\left( {\mathbf{x}} \right) = 1.2{\text{  +  }}2 * \boxed{{x_4} * \cos {x_2}} + 0.5 * \boxed{{e^{1.2{x_3}}} * \sin 3{x_1} * \cos {x_4}} - 2 * \boxed{\cos \left( {1.5{x_5} + 5} \right)}$, where ${x_i} \in \left[ { - 3,3} \right],i = 1,2, \cdots ,5.$

\item Case 9. $f\left( {\mathbf{x}} \right) = 0.5 * \boxed{\frac{{\cos \left( {{x_3}{x_4}} \right)}}{{{e^{{x_1}}} * {x_2}^{2}}} * \sin \left( {1.5{x_5} - 2{x_6}} \right)}$, where ${x_i} \in \left[ { - 3,3} \right],i = 1,2, \cdots ,6.$

\item Case 10. $f\left( {\mathbf{x}} \right) = 1.2 - 2 * \boxed{\frac{{{x_1} + {x_2}}}{{{x_3}}} * \cos {x_7}} + 0.5 * \boxed{{e^{{x_7}}} * {x_4} * \sin \left( {{x_5}{x_6}} \right)}$, where ${x_i} \in \left[ { - 3,3} \right],i = 1,2, \cdots ,7.$
\end{description}

\section{4 real-world cases of Section \ref{Section5-2}}
\label{appendixB}
4 real-world cases of Example \ref{ex2-1}-\ref{ex2-4}, namely Eq. (\ref{nozzle}), Eq. (\ref{CL}), Eq. (\ref{flowequ}) and Eq. (\ref{CLm}), are re-written in dimensionless forms with variable $x_i$ (which is used in our program) instead of physical variable in this appendix, respectively. All the minimal blocks of the following equations are boxed.

\begin{description}
\item Case 11. Eq. (\ref{nozzle}) is re-written as

\begin{equation}
f\left( {\mathbf{x}} \right) = 4 \times 10^3 * \boxed{\frac{{{x_1} * {x_2}}}{{\sqrt {{x_3}} }}},
\end{equation}

where ${x_1} = {p_0} \in \left[ {4,6} \right],{x_2} = {A^ * } \in \left[ {0.5,1.5} \right],{x_3} = {T_0} \in \left[ {250,260} \right]$. $\gamma$ and $R$ of Eq. (\ref{nozzle}) are set to $\gamma  = 1.4$ and $R = 287$.

\item Case 12. Eq. (\ref{CL}) is re-written as

\begin{equation}
f\left( {\mathbf{x}} \right) = \boxed{{x_1} * \left( {{x_2} - 2} \right)} + \boxed{{x_3} * {x_4} * \frac{{{x_5}}}{{{x_6}}}},
\end{equation}

where ${x_1} = {C_{L\alpha }} \in \left[ {0.4,0.8} \right],{x_2} = \alpha  \in \left[ {5,10} \right],{x_3} = {C_{L{\delta _e}}} \in \left[ {0.4,0.8} \right],{x_4} = {\delta _e} \in \left[ {5,10} \right],{x_5} = {S_{{\text{HT}}}} \in \left[ {1,1.5} \right],{x_6} = {S_{{\text{ref}}}} \in \left[ {5,7} \right]$. ${\alpha _0}$ Eq. (\ref{CL}) is set to ${\alpha _0} =  - 2$.

\item Case 13. Eq. (\ref{flowequ}) is re-written as

\begin{equation}
f\left( {\mathbf{x}} \right) = \boxed{{x_1} * {x_2} * {x_5}\left( {1 - \frac{{x_4^2}}{{x_5^2}}} \right)} + \frac{1}{{2\pi }} * \boxed{{x_3} * \ln \frac{{{x_5}}}{{{x_4}}}},
\end{equation}

where ${x_1} = {V_\infty } \in \left[ {60,65} \right],{x_2} = \theta  \in \left[ {30,40} \right],{x_3} = \Gamma  \in \left[ {5,10} \right],{x_4} = R \in \left[ {0.5,0.8} \right],{x_5} = r \in \left[ {0.2,0.5} \right]$.

\item Case 14. Eq. (\ref{CLm}) is re-written as

\begin{equation}
\begin{aligned}
  f\left( {\mathbf{x}} \right) = \boxed{{x_1}} &  - 0.25 * \boxed{{x_4} * {x_5} * {x_6} * \left[ {4 + 0.1 \left( {\frac{{{x_2}}}{{{x_3}}}} \right) - {{\left( {\frac{{{x_2}}}{{{x_3}}}} \right)}^2}} \right]} + \boxed{{x_{13}} * \frac{{{x_{14}}}}{{{x_{15}}}} * {x_{18}} * {x_7}} &  \\ 
   &  - \boxed{{x_{13}} * \frac{{{x_{14}}}}{{{x_{15}}}} * {x_8}} + \boxed{{x_{13}} * \frac{{{x_{14}}}}{{{x_{15}}}} * {x_9}} + \boxed{{x_{16}} * \frac{{{x_{17}}}}{{{x_{15}}}} * {x_{18}} * {x_{10}}}  &  \\ 
   & - \boxed{{x_{16}} * \frac{{{x_{17}}}}{{{x_{15}}}} * {x_{11}}}   + \boxed{{x_{16}} * \frac{{{x_{17}}}}{{{x_{15}}}} * {x_{12}}}, \\ 
\end{aligned}
\end{equation}

where ${x_1} = {C_{L\max ,{\text{W}}}} \in \left[ {0.4,0.8} \right],{x_{4,13,16}} = {C_{L\alpha ,W}} = {C_{L\alpha ,{\text{H}}}} = {C_{L\alpha ,{\text{c}}}} \in \left[ {2,5} \right],{x_2} = {d_{\text{F}}} \in \left[ {3,4} \right],{x_3} = b \in \left[ {20,30} \right],{x_{14,17}} = {S_{\text{H}}} = {S_{\text{c}}} \in \left[ {1,1.5} \right],{x_{15}} = S \in \left[ {5,7} \right],{x_{18}} = {\alpha _{CL\max }} \in \left[ {10,20} \right],{x_{8,11}} = {\varepsilon _{0,{\text{H}}}} = {\varepsilon _{0,{\text{c}}}} \in \left[ {1,1.5} \right],{x_{9,12}} = {\psi _{\text{H}}} = {\psi _{\text{c}}} \in \left[ {1,2} \right]$. The partial derivative $ \frac{{\partial \varepsilon }}{{\partial \alpha }}$ and $\frac{{\partial {\varepsilon _{\text{c}}}}}{{\partial \alpha }}$ of Eq. (\ref{CLm}) are set to ${x_{7,10}} = \frac{{\partial \varepsilon }}{{\partial \alpha }} = \frac{{\partial {\varepsilon _{\text{c}}}}}{{\partial \alpha }} \in \left[ {0.5,1.5} \right]$.
\end{description}

\bibliographystyle{model5-names}

\bibliography{main}

\end{document}